\documentclass[a4paper, 12pt]{amsart}
\usepackage[utf8]{inputenc}
\usepackage{amsthm,amsmath}
\usepackage{amsopn,amssymb}
\usepackage[dvipsnames]{xcolor}
\usepackage{fullpage,amsfonts,mathtools,mathrsfs}
\usepackage{aliascnt,enumerate,comment,xcolor,graphicx}
\usepackage{enumitem}
\usepackage{lineno} 
\usepackage[colorlinks=true, 
pdfstartview=FitV, linkcolor=blue, 
citecolor=blue, urlcolor=blue, backref = page]{hyperref}

\usepackage[msc-links, alphabetic]{amsrefs}

\usepackage[capitalise]{cleveref}
\usepackage{cancel}
\allowdisplaybreaks



\newtheorem{theorem}{Theorem}[section]

\newaliascnt{lemma}{theorem}
\newtheorem{lemma}[lemma]{Lemma}
\aliascntresetthe{lemma}

\newaliascnt{condition}{theorem}

\aliascntresetthe{condition}

\newaliascnt{corollary}{theorem}
\newtheorem{corollary}[corollary]{Corollary}
\aliascntresetthe{corollary}

\newaliascnt{proposition}{theorem}

\aliascntresetthe{proposition}

\theoremstyle{definition}
\newaliascnt{definition}{theorem}

\aliascntresetthe{definition}

\theoremstyle{remark}

\newaliascnt{remark}{theorem}

\aliascntresetthe{remark}

\allowdisplaybreaks

\numberwithin{equation}{section}

\DeclareMathOperator*{\E}{\mathbb{E}}

\usepackage{thmtools}
\usepackage[framemethod=TikZ]{mdframed}

\title{Partition regular linear equations over Sidon sets}
\author{Dev Ranjan Pandey}
\address{Department of Mathematics, Indian Institute of Science 
Education and Research Bhopal, Bhopal Bypass Road, Bhauri, Bhopal 462066, Madhya Pradesh, India}
\email{dev23@iiserb.ac.in}

\subjclass{11B30, 11B25}
\keywords{Partition regularity, Equidistribution, Sidon sets in $\mathbb{Z}^d$}

\begin{document}

\begin{abstract}
  In this article, 
  we show that the Sidon subsets of \([N]^d\) are Fourier uniform, 
  and we prove a dense model lemma for Sidon sets.
  Using these results and a higher-dimensional version of Rado's theorem,
  we prove that given 
  any partition regular linear equation in \(s \geq 5\) variables
  with nonzero coefficients, 
  and any finite partition of a sufficiently dense Sidon subset \(S\) of \([N]^d\),
  the number of monochromatic solutions to this equation is 
  \(\gg |S|^s N^{-d}\) for all large \(N\).
  As a corollary of the Fourier uniformity result, 
  we show that dense Sidon subsets of \([N]^d\)
  are equidistributed in certain arithmetic and Bohr structures.
  Our proofs are motivated by the arguments of Ortega and Prendiville.
\end{abstract}

\maketitle

\section{Introduction} 

\subsection{Motivation}
A subset \(S\) of a semigroup is said to be a Sidon set 
if \(s_1 + s_2 = s_3 + s_4\) implies 
\(\left\{ s_1, s_2 \right\} = \left\{ s_3, s_4 \right\}\)
for every \((s_1, s_2, s_3, s_4) \in S^4\).
We write \([N]^d\) for the set \([1,N]^d \cap \mathbb{Z}^d\).
Singer \cite{Singer_B2} proved that 
given any \(\varepsilon > 0\) and for all sufficiently large \(N\), 
there exists a Sidon subset of \([N]\) 
of size at least \((1 - \varepsilon)N^{1/2}\).
Erd\H{o}s and Tur\'an \cite{Erdos-Turan_B2} proved that
the size of any Sidon set in \([N]\) is at most 
\(N^{1/2} + O(N^{1/4})\).
The upper bound for the size of Sidon sets in \([N]\)
was improved further in subsequent years
by several authors.
Lindstr\"om \cite{Lindstrom_B2_sequences} proved that 
the size of any Sidon set in \([N]^d\) is at most 
\(N^{d/2} + O(N^{d^2/(2d+2)})\).
Cilleruelo \cite{Cilleruelo_Sidon_sets_in_N^d} extended the result of Lindstr\"om 
and proved that given any positive integers \(N_1, \dots, N_d\),
the size of any Sidon subset of \([N_1] \times \dots \times [N_d]\)
is at most \(N^{1/2} + O(N^{d/(2d+2)})\), 
where \(N = N_1 \cdots N_d\).
Also, he proved that there exists a Sidon subsets
of \([N]^d\) of size 
\((1 + o(1))N^{d/2}\).
A Sidon subset of \([N]^d\) whose size is close to the maximum possible size 
is called an \emph{extremal} Sidon set.
Erd\H{o}s and Freud \cite[Lemma 1]{Erdos-Freud_Sidon-sum}
proved that extremal Sidon subsets of \([N]\) 
are uniformly distributed in the interval \([1,N]\).
Given \(\delta > 0\), we say a Sidon subset \(S\) of \([N]^d\) 
to be \emph{\(\delta\)-dense} if
\(|S| \geq \delta N^{d/2}\).
Ortega and Prendiville \cite{OrtegaPrendiville_extremal-Sidon-sets}
proved that dense Sidon subsets of \([N]\)
are equidistributed in arithmetic progressions.
Moreover, they
proved that dense Sidon subsets of \([N]\) are equidistributed in 
smooth Bohr neighborhoods and regular Bohr sets.

A matrix \(A \in M_{t \times s}(\mathbb{Z})\) is said to satisfy 
the \emph{columns condition} if there exists a partition 
\(I_1, \dots, I_k\) of the column vectors of \(A\)
such that the following hold.
\begin{enumerate}
  \item \(\sum_{a \in I_1} a = 0\).
  \item \(\sum_{a \in I_j} a 
    \in \mathrm{span}_{\mathbb{Q}}(I_1 \cup \dots \cup I_{j-1})\)
    for every \(2 \leq j \leq k\).
\end{enumerate}
For a given matrix \(A\) with integer entries, 
the system of linear equations \(A \mathbf{x} = 0\) is said to be 
\emph{partition regular} if given any finite partition of the integers, 
the system \(A \mathbf{x} = 0\) admits a monochromatic solution.  
A system of linear equations \(A \mathbf{x} = 0\) is partition regular 
if and only if \(A\) satisfies the columns condition, see 
\cite[p.~250]{FGR_Roth}.
Given a matrix \(A \in M_{t \times s}(\mathbb{Z})\) of rank \(t\)
satisfying the columns condition,
and an \(r\)-coloring of the integers,
Frankl, Graham, and R\"odl \cite[Theorem~1]{FGR_Roth} proved
that there are at least \(c N^{s-t}\) many monochromatic solutions 
to the system \(A \mathbf{x} = 0\),
where \(c\) is a positive constant depending on \(A\) and \(r\).
Ortega and Prendiville \cite{OrtegaPrendiville_extremal-Sidon-sets} 
proved a Fourier uniformity property and a dense model lemma 
for Sidon subsets of \([N]\), and combining it with 
\cite[Theorem~1]{FGR_Roth} 
they obtained the following result.

\begin{theorem} [Ortega--Prendiville]
  Given positive integers \(r\) and \(s \geq 5\),
  any linear equation \(a_1 x_1 + \dots + a_s x_s = 0\)
  with nonzero coefficients and satisfying the columns condition
  admits \(\gg_{a_i, r} |S|^s/N\) many monochromatic solutions in any
  \(r\)-coloring of an extremal Sidon subset \(S\) of \([N]\).
\end{theorem}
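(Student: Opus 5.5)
We would follow the transference strategy of Ortega and Prendiville. Write \(\nu = \frac{N^{1/2}}{|S|}\cdot N^{1/2} 1_S\), or more conveniently the normalised weight \(\nu = \frac{N}{|S|}1_S\) on \([N]\), so that \(\sum_{x\in[N]}\nu(x) = N\). Given an \(r\)-colouring \(S = C_1\cup\dots\cup C_r\), we set \(f_i = \nu 1_{C_i}\), so that \(0\le f_i\le \nu\) and \(\sum_i f_i = \nu\). The number of monochromatic solutions, normalised, is
\[
\Lambda(f_i) = \sum_{a_1x_1+\dots+a_sx_s=0} f_i(x_1)\cdots f_i(x_s),
\]
and it suffices to show \(\sum_i \Lambda(f_i) \gg N^{s-1}\), since \(\Lambda(f_i) = (N/|S|)^s \cdot \#\{\text{solutions in } C_i\}\).

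The first step is Fourier uniformity of \(S\): we need \(\|\widehat{\nu} - \widehat{1_{[N]}}\|_\infty = o(N)\). For extremal Sidon sets this follows from the Erd\H{o}s--Freud type equidistribution. Concretely, expand \(|\widehat{1_S}(\alpha)|^2\). Because \(S\) is Sidon and extremal, the difference set \(S-S\) covers each nonzero \(h\) with \(|h|\le N\) at most once, and covers almost all \(h\) in a suitable range. Comparing this with \(|\widehat{1_{[N]}}(\alpha)|^2\) via a Fej\'er-kernel argument shows that \(1_S\) correlates with every character essentially as \(\frac{|S|}{N}1_{[N]}\) does.

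The second step is restriction: a bound \(\sum_{a}|\widehat{\nu}(\alpha)|^{p}\,d\alpha \ll N^{p-1}\) for some \(p<5\), and in fact for \(p=4\) up to constants. Here the Sidon property gives it directly, since \(\int|\widehat{1_S}|^4 = E(S)\le 2|S|^2\). Hence
\[
\int_0^1 |\widehat{\nu}|^4 \ll (N/|S|)^4 |S|^2 \ll N^{3},
\]
using \(|S|\gg N^{1/2}\). This is exactly why \(s\ge5\) is needed: with five variables we have one spare \(L^\infty\) factor.

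The third step is a dense model lemma. For each \(i\), we produce \(g_i\colon[N]\to[0,1+o(1)]\) (we can truncate to \([0,1]\) after a small error) with \(\|\widehat{f_i}-\widehat{g_i}\|_\infty = o(N)\) and \(\sum_i g_i \approx 1_{[N]}\). Following the standard construction, we convolve \(f_i\) with a Bohr-set or Fej\'er-type kernel \(\beta\) built from the large spectrum of the \(f_i\). Fourier uniformity of \(\nu\) gives \(\nu*\beta\le 1+o(1)\), hence \(0\le f_i*\beta\le 1+o(1)\), and the restriction estimate controls the size of the large spectrum.

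The counting lemma then compares the two counts. By telescoping and H\"older, using \(L^4\) restriction for four factors and \(L^\infty\) for the remaining one (\(s\ge5\) makes this possible, with \(\int|\widehat{f}|^{s-1}\ll N^{s-2}\) by restriction),
\[
|\Lambda(f_i)-\Lambda(g_i)| \ll \|\widehat{f_i}-\widehat{g_i}\|_\infty\, N^{s-2} = o(N^{s-1}).
\]
For the final step, the \(g_i\) form a fractional colouring of \([N]\) with \(\sum_i g_i\ge 1-o(1)\). We need a functional Frankl--Graham--R\"odl statement: \(\sum_i \Lambda(g_i)\gg_{a,r} N^{s-1}\) for such \(g_i\). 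We would obtain it by random rounding: colour \(x\) with colour \(i\) with probability \(g_i(x)/\sum_j g_j(x)\). The expected number of monochromatic solutions is then \(\ll\sum_i\Lambda(g_i)\), up to a factor \((1-o(1))^{-s}\) and after discarding degenerate solutions, and this expectation is \(\ge cN^{s-1}\) by \cite[Theorem~1]{FGR_Roth}.

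We expect the main obstacle to be the dense model step, in particular arranging that \(g_i\) is bounded by \(1\) uniformly, which requires Fourier uniformity of \(\nu\) at every frequency of the Bohr set. The key input is a uniform bound \(\sup_\alpha|\widehat{\nu}(\alpha)-\widehat{1_{[N]}}(\alpha)| = o(N)\) extracted from extremality.
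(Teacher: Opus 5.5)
Your route is correct but differs from the paper's in the dense model step and the final step. The overall architecture is the one the paper uses for its $d$-dimensional theorem, which recovers this statement at $d=1$: Fourier uniformity of $\nu=\frac{N}{|S|}\mathbf{1}_S$, then the bound $\int_{\mathbb{T}}|\widehat{\mathbf{1}}_S|^4\le 2|S|^2$ feeding a telescoping/H\"older counting lemma with one spare $L^\infty$ factor (hence $s\ge 5$), then transference to a fractional colouring of $[N]$, then Frankl--Graham--R\"odl. Your first step, once made precise, is also the paper's argument. It does not come from Erd\H{o}s--Freud, since interval equidistribution only sees low frequencies. It comes from van der Corput differencing of $\mathbf{1}_S-\frac{|S|}{N}\mathbf{1}_{[N]}$ with Fej\'er weights of width $N^{1/2}\ll H\ll N$, plus the Cauchy--Schwarz bound showing that $(S-S)\setminus\{0\}$ carries almost all the Fej\'er mass. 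A global expansion of $|\widehat{\mathbf{1}}_S|^2$ would not suffice, because $S-S$ has density only about $1/2$ in $[-N,N]$. The paper never constructs the $g_i$: it argues by contradiction, obtains dual functions $\phi_i$ with $\|\phi_i\|^*\le 4/\varepsilon$ by hyperplane separation, approximates $\max\{0,\phi_1,\dots,\phi_r\}$ by a polynomial (Stone--Weierstrass), and uses submultiplicativity of $\|\cdot\|^*$ to force $\|\widehat{\nu}-\widehat{\mathbf{1}}_{[N]}\|_\infty\gg_{\varepsilon,r}N$. That route needs only $L^\infty$ closeness of $\widehat\nu$ to $\widehat{\mathbf{1}}_{[N]}$, in effect yields models supported on $[N]$ with $\sum_i g_i\le\mathbf{1}_{[N]}$, and works verbatim in $[N]^d$; the price is inexplicit constants. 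Your Green-style convolution $g_i=f_i*\beta$ is explicit and effective, but it costs more bookkeeping. It uses the $L^4$ bound a second time, to bound the rank of the Bohr set so that $\|\widehat\beta\|_{L^1(\mathbb{T})}\ll_\varepsilon 1/N$. It requires discretising the large spectrum on $\mathbb{T}$ at scale $1/N$. Unlike in Green's density setting, it also requires localising $\beta$ to $[-\eta N,\eta N]$. The reason is that the colouring step needs a pointwise lower bound for $\sum_i g_i=\nu*\beta\approx\mathbf{1}_{[N]}*\beta$ on $[N]$. For a symmetric localised kernel this bound is $\ge 1/2-o(1)$, not the $1-o(1)$ you claim near the endpoints, but it is still enough for your rounding. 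Finally, random rounding is a valid functional Frankl--Graham--R\"odl. The paper's pigeonhole is simpler, though: colour $n$ by some $i_n$ with $g_{i_n}(n)\ge 1/r$ and lose a factor $r^{-s}$, with no correction needed for degenerate solutions.
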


Freschi, Hancock, and Treglown \cite{Freschi_ramsey-properties-primes}, 
and Robertson \cite{Robertson_multidimensional_Rado}
independently proved the following higher dimensional 
analog of \cite[Theorem~1]{FGR_Roth}.

\begin{theorem} \label{Robertson higher dimensional Rado}
  Let \(r, d\) be positive integers and 
  \(A \in M_{t \times s}(\mathbb{Z})\) be a matrix of rank \(t\) 
  satisfying the columns condition.
  Then there exist positive constants \(c, N_0\),
  both depending on \(r, d\) and \(A\), such that for every \(N \geq N_0\) 
  and every \(r\)-coloring of \([N]^d\)
  there are at least \(c N^{d(s-t)}\) many monochromatic solutions 
  to the system \(A \mathbf{x} = 0\).
\end{theorem}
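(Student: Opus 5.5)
The plan is to follow the Frankl--Graham--R\"odl strategy: use Deuber's $(m,p,c)$-sets to obtain one monochromatic solution inside a bounded configuration, and then average over all copies of that configuration in $[N]^d$ to boost one solution into $\gg N^{d(s-t)}$ solutions. Throughout, $r,d,A$ are fixed and all implied constants may depend on them.

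\emph{Step 1 (Deuber sets in $\mathbb{Z}^d$).} For vectors $y_0,\dots,y_m\in\mathbb{Z}^d$, define the $(m,p,c)$-set they generate as
\[
D(y)=\Bigl\{c\,y_i+\sum_{j>i}\lambda_j y_j : 0\le i\le m,\ \lambda_j\in[-p,p]\cap\mathbb{Z}\Bigr\}.
\]
Rado's theorem, in Deuber's form, supplies $m,p,c$ depending only on $A$ and $r$ with the following property. There are fixed integer coefficients such that $x_j=\sum_i \alpha_{ji}y_i$ solves $A\mathbf{x}=0$ inside $D(y)$ for \emph{every} choice of generators $y$, because the columns condition is used only through the formal linear algebra of the coefficients. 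By enlarging $m$ if necessary, I would also arrange that the linear map $y\mapsto \mathbf{x}$, from $(\mathbb{Z}^d)^{m+1}$ to the solution space $\ker(A\otimes I_d)$, has rank $d(s-t)$. Since $A$ has rank $t$, this solution space has dimension $d(s-t)$.

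\emph{Step 2 (Deuber's theorem).} Deuber's partition theorem is purely combinatorial and holds in any abelian group. It gives $n,q,\mu$ such that every $r$-coloring of an $(n,q,\mu)$-set contains a monochromatic $(m,p,c)$-set. Moreover, the generators of that $(m,p,c)$-set are fixed integer combinations of the original generators $z_0,\dots,z_n$, drawn from a finite list of possible patterns. Combining this with Step 1, every $r$-coloring of $D(z)$ contains a monochromatic solution of the form $\mathbf{x}=T(z)$. Here $T$ ranges over a finite family $\mathcal{T}$ of integer linear maps $(\mathbb{Z}^d)^{n+1}\to\ker(A\otimes I_d)$, each of rank $d(s-t)$. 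I would check that this rank is preserved under composition with the embedding of the $(m,p,c)$-generators.

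\emph{Step 3 (counting).} Choose $z_0,\dots,z_n\in[\varepsilon N]^d$, with $\varepsilon$ small in terms of $n,q,\mu$, so that $D(z)$ lies in $[-N,N]^d$. To stay inside $[N]^d$ with positive coordinates, I would use the standard trick of restricting $z_0$ to a positive cone and ordering the coefficients. There are $\gg N^{d(n+1)}$ such tuples, and each yields at least one monochromatic solution $T(z)$ with $T\in\mathcal{T}$. By pigeonhole, some single $T$ is used for $\gg N^{d(n+1)}$ tuples. Each fibre of $T$ restricted to the box has size $\ll N^{d(n+1)-d(s-t)}$, because $T$ has rank $d(s-t)$. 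Hence the number of distinct monochromatic solutions is $\gg N^{d(s-t)}$.

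The main obstacle is the rank condition in Steps 1--2. One must ensure that the solutions produced by Deuber's theorem vary with $d(s-t)$ free parameters rather than collapsing onto a lower-dimensional family. I would handle this by fixing a basis $v_1,\dots,v_{s-t}$ of $\ker A\cap\mathbb{Z}^s$. I would then choose the $(m,p,c)$-set large enough that it contains a solution $\sum_k v_k\otimes w_k$ in which $w_1,\dots,w_{s-t}$ are distinct generators.
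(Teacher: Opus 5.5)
The paper does not prove this theorem. It is quoted from Freschi--Hancock--Treglown and Robertson, used as a black box, and only applied with $t=1$ (in the weighted counting lemma). Your Deuber-set argument is therefore an independent, self-contained route that works directly in $\mathbb{Z}^d$, and its structure is sound:
the formal Deuber theorem applies to the coloring pulled back to the coefficient pattern of $D(z)$; the maps $T$ form a finite family; and the fibre bound for an integer map of rank $d(s-t)$ on a box of side $\asymp N$ gives the count.

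The rank check you defer in Step 2 is in fact automatic. Let $M$ be the integer matrix expressing the new generators in terms of $z_0,\dots,z_n$. Require the rows $c e_i M$ and $(c e_i\pm e_j)M$ ($j>i$) to have leading coefficient exactly $\mu$. This forces the pivot of row $j$ of $M$ to lie strictly to the right of the pivot of row $i$. Hence $M$ has full row rank and $\mathrm{rank}(\Lambda M)=\mathrm{rank}\,\Lambda$. Two steps do need repair.

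\emph{The rank fix.} As written in your last paragraph, it fails. A solution $\sum_k v_k\otimes w_k$ lies in $D(y)$ identically in $y$ only if, in each coordinate, the lowest-index generator occurring carries coefficient exactly $c>0$. An arbitrary basis of $\ker A$ gives no control over this. For $x_1+x_2=x_3$ with the basis $v_1=(1,-1,0)$, $v_2=(1,0,1)$, the coordinate $x_2=-w_1$ is never in an $(m,p,c)$-set.

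Supplying these leading terms is exactly the job of the columns condition, so superpose instead:
\begin{itemize}
\item Take Rado's solution $x_j=c\,y_{b(j)}+\sum_{b(j)<l<k}\lambda_{jl}y_l$ on $y_0,\dots,y_{k-1}$, where $y_{b(j)}$ is the generator attached to the block of the column partition containing column $j$. Call its coefficient matrix $\Lambda$.
\item Append new generators $y_k,\dots,y_{k+s-t-1}$.
\item Put $x'_j=x_j+\sum_{i=1}^{s-t}v_{i,j}\,y_{k-1+i}$, and enlarge $p$ to at least $\max_{i,j}|v_{i,j}|$.
\end{itemize}
Each $x'_j$ still has leading term $c\,y_{b(j)}$, so $x'\in D(y)^s$ and $Ax'=0$. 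The coefficient matrix $[\Lambda\mid V]$, with $V$ having columns $v_i$, has rank exactly $s-t$.

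\emph{Positivity.} Confining $z_0$ to a cone does not keep $D(z)$ inside $[N]^d$, because $D(z)$ contains $\mu z_i-q\sum_{j>i}z_j$ for every $i$. You need all generators on separated scales. For example, take $z_i\in[\varepsilon_iN,2\varepsilon_iN]^d$ with $\varepsilon_{i+1}\le\eta\,\varepsilon_i$, where $\eta=\min\{1/2,\mu/(8q)\}$, and $\varepsilon_0\le 1/(3\mu)$. Then every element $\mu z_i+\sum_{j>i}\lambda_jz_j$ has all coordinates in $[\mu\varepsilon_iN/2,\,3\mu\varepsilon_iN]\subseteq[1,N]$ for large $N$. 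There are still $\gg N^{d(n+1)}$ admissible tuples, so your pigeonhole-and-fibre count goes through unchanged.
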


\subsection{Results obtained}

In this article, given any positive integer \(d\),
we prove a Fourier uniformity result and
a dense model lemma for Sidon subsets of \([N]^d\).
Using these results we establish a partition regularity result for 
sufficiently dense Sidon subsets of \([N]^d\).
Also, we show that dense Sidon subsets of \([N]^d\) are 
equidistributed in certain arithmetic and Bohr structures.
The ideas of the proofs of these results are motivated by the arguments of
Ortega and Prendiville \cite{OrtegaPrendiville_extremal-Sidon-sets}.
We prove the following result, which establishes a
\(d\)-dimensional analog of \cite[Theorem~1.2]{OrtegaPrendiville_extremal-Sidon-sets}.
It shows that 
every Sidon subset of \([N]^d\) is Fourier uniform.

\begin{theorem}[Fourier uniformity] 
\label{Fourier uniformity}
  Every Sidon subset \(S\) of \([N]^d\) satisfies 
  \[
    \left\| 
      \widehat{\mathbf{1}}_S -
      \frac{|S|}{N^d} \widehat{\mathbf{1}}_{[N]^d}
    \right\|_\infty
    \ll_d
    N^{d/2}
    \left(  
      \left| \frac{|S|}{N^{d/2}} - 1 \right| + 
      {N^{-\frac{d}{2(1+2d)}}}
    \right)^{1/2}.
  \]
\end{theorem}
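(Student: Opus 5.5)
The plan is to adapt the Ortega--Prendiville argument, which controls the Fourier coefficients of a Sidon set through a short-interval (here short-box) counting argument in the style of Erd\H{o}s--Tur\'an and Lindstr\"om/Cilleruelo. Write \(f = \mathbf{1}_S - \frac{|S|}{N^d}\mathbf{1}_{[N]^d}\) and fix \(\alpha \in \mathbb{T}^d\). The strategy is to estimate \(\widehat f(\alpha)\) by an \(L^2\) computation over translates of a small box \(B = [H]^d\). We will choose \(H = N^{\theta}\) with \(\theta\) to be optimised; the optimal choice should give the error exponent \(\frac{d}{2(1+2d)}\).

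\textbf{Step 1 (smoothing).} Since \(\mathbf{1}_{[N]^d}\) and \(\mathbf{1}_{[N]^d} * \frac{1}{H^d}\mathbf{1}_{-B}\) differ only near the boundary, we have
\[
\widehat{f}(\alpha) = \frac{1}{H^d}\sum_{x} \sum_{h\in B} f(x+h)e(\alpha\cdot (x+h)) + O(dHN^{d-1}).
\]
Let \(F_x=\sum_{h\in B} f(x+h)e(\alpha\cdot h)\). Cauchy--Schwarz over the \(\ll N^d\) values of \(x\) with \(F_x\neq 0\) then gives
\[
|\widehat f(\alpha)|^2 \ll \frac{N^d}{H^{2d}}\sum_x |F_x|^2 + H^2N^{2d-2}.
\]

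\textbf{Step 2 (expansion).} Expanding \(\sum_x|F_x|^2\) yields \(\sum_{h,h'\in B} e(\alpha\cdot(h-h'))\sum_x f(x+h)\overline{f(x+h')}\), which is a weighted sum of correlations \(r_f(m)=\sum_x f(x+m)f(x)\) over \(m\in B-B\). Each weight has modulus at most \(\prod_i (H-|m_i|)\le H^d\). Expanding \(f\) produces four pieces. The \(\mathbf{1}_S\)--\(\mathbf{1}_S\) piece is controlled by the Sidon property: \(r_S(m)\le 1\) for \(m\neq 0\), and \(r_S(0)=|S|\). This gives a contribution of at most \(H^d|S| + \sum_{m\ne 0}\prod_i(H-|m_i|)\), and the second term is at most \(H^{2d}\).

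The cross terms and the \([N]^d\)--\([N]^d\) term are computed exactly up to boundary errors. Here \(\sum_x \mathbf{1}_S(x+m)\mathbf{1}_{[N]^d}(x) = |S| + O(|S|\cdot\#\{\text{boundary}\})\), which is where care is needed. To avoid the boundary issue we will instead bound the boundary counts trivially by \(|S\cap(\text{shell of width }H)|\le\) (Cilleruelo's bound for boxes) \(\ll (HN^{d-1})^{1/2}\).

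\textbf{Step 3 (collecting).} Altogether
\[
\sum_x|F_x|^2 \le H^d|S| + H^{2d} - 2H^{2d}\frac{|S|^2}{N^d} + H^{2d}\frac{|S|^2}{N^d} + \text{errors}
= H^d|S| + H^{2d}\Bigl(1-\frac{|S|^2}{N^d}\Bigr) + \text{errors}.
\]
Multiplying by \(N^d/H^{2d}\) gives
\[
|\widehat f(\alpha)|^2 \ll N^d\Bigl|1-\frac{|S|^2}{N^d}\Bigr| + \frac{N^d|S|}{H^d} + \text{errors}.
\]
Since \(|S|\le N^{d/2}(1+o(1))\) by Lindstr\"om/Cilleruelo, the first term is \(\ll N^d\,\bigl||S|/N^{d/2}-1\bigr|\). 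The term \(N^{3d/2}/H^d\) is too large as it stands. It must instead be handled by passing to \(x\) ranging over all of \(\mathbb{Z}^d\) and using the identity \(\sum_x|F_x|^2\) with the diagonal \(|S|H^d\) paired against \(\bigl(\sum_x F_x\bigr)\)-type main terms. In other words, the correct normalisation works with \(\sum_x |F_x - \mathbb{E}|^2\), and the \(H^d|S|\) diagonal is then compared to \(H^{2d}|S|^2/N^d\).

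This bookkeeping is the step I expect to be the main obstacle: arranging the decomposition so that the diagonal term \(H^d|S|\) is small relative to \(H^{2d}\). That requires \(H^d \gg N^{d/2}\), i.e. boxes containing many elements of \(S\). With that choice, the off-diagonal Sidon count \(H^{2d}\) versus \(H^{2d}|S|^2/N^d\) produces the density defect. The boundary and Cilleruelo error terms, of size \(N^{d}\cdot H^{d}N^{d/(2d+2)}\) relative to \(H^{2d}\), and the smoothing error \(H^2N^{2d-2}\), are then balanced by choosing \(H\) as an appropriate power of \(N\). This balancing is the computation that produces \(N^{-\frac{d}{2(1+2d)}}\).

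I would first try \(H\) with \(H^d = N^{d/2+\eta}\) and optimise \(\eta\), checking that all error terms are \(\ll N^{d}\cdot N^{-\frac{d}{2(1+2d)}}\). Taking square roots then yields the stated bound.
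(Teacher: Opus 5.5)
There is a genuine gap in Steps 2--3, and it is the central step rather than bookkeeping. Your weights are $e(\alpha\cdot m)\lambda(m)$, where $\lambda(m)=\#\{(h,h')\in B^2: h-h'=m\}=\prod_i\max\{H-|m_i|,0\}$. Up to the Cilleruelo boundary errors, the two cross terms therefore contribute $-2\frac{|S|^2}{N^d}|\widehat{\mathbf{1}}_B(\alpha)|^2$ and the $[N]^d$--$[N]^d$ term contributes $+\frac{|S|^2}{N^d}|\widehat{\mathbf{1}}_B(\alpha)|^2$, where $|\widehat{\mathbf{1}}_B(\alpha)|^2=\sum_m e(\alpha\cdot m)\lambda(m)$. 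This is not $H^{2d}$, and it is much smaller than $H^{2d}$ once $\|\alpha\|$ is large compared with $1/H$. On the other side, you bounded the $\mathbf{1}_S$--$\mathbf{1}_S$ piece only from above, by $H^d|S|+H^{2d}$, using $r_S(m)\le 1$ for $m\neq 0$. So the cancellation $H^{2d}(1-|S|^2/N^d)$ you write in Step 3 holds only at $\alpha=0$, where $\widehat f(0)=0$ anyway. For general $\alpha$ your estimates give only $\sum_x|F_x|^2\lesssim H^{2d}$, i.e.\ $|\widehat f(\alpha)|\ll N^{d/2}$ with no decay factor. The diagonal term $N^d|S|/H^d$ is not the real obstacle: it is harmless once $H^d\gg N^{d/2}$, as you eventually note.

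What is missing is a \emph{lower} bound saying that $S-S$ occupies almost all of the $\lambda$-weighted box, so that $r_S(m)\approx 1$ in weighted $\ell^1$. That lets you put absolute values on each $m$ (which kills the phase) without losing the main term. It comes from Cauchy--Schwarz on the support of $\mathbf{1}_S*\mathbf{1}_B$, which has at most $(N+H)^d$ points:
\[
  \sum_m \lambda(m)\, r_S(m)=\sum_x (\mathbf{1}_S*\mathbf{1}_B)(x)^2\ \ge\ \frac{|S|^2H^{2d}}{(N+H)^d}.
\]
Together with $r_S(m)\le 1$ for $m\neq 0$, this gives
\[
  \sum_m \lambda(m)\,\bigl|r_S(m)-1\bigr| \le 2H^d|S| + H^{2d}\Bigl(1-\frac{|S|^2}{(N+H)^d}\Bigr).
\]
This is precisely the paper's \cref{Lower bound for Fejer}. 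The paper uses it inside van der Corput after splitting $|r_f(m)|$ into four pieces, each compared with a constant; switching between the constants $1$ and $|S|^2/N^d$ costs $\bigl||S|^2/N^d-1\bigr|$. With this in place, your boundary estimates (Cilleruelo on shells of width $H$, giving $N^d(H/N)^{1/2}$) balance against $N^{3d/2}/H^d$ at $H=N^{(1+d)/(1+2d)}$, as you anticipated.

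Separately, drop the $O(dHN^{d-1})$ error in Step 1. Averaging over all $x\in\mathbb{Z}^d$ is exact, whereas the squared error $H^2N^{2d-2}\ge N^{2d-1}$ you carry already exceeds $N^d$ for $H\ge N^{1/2}$, so it could never be balanced away.
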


As a consequence of the above result,
we obtain 
in \cref{Equidistribution in intervals} that
dense Sidon subsets of \([N]^d\) are 
equidistributed in the set 
\(P_1 \times \cdots \times P_d\) where \(P_i\) is an arithmetic progression in
\(\mathbb{Z}\),
and 
in \cref{Equidistribution in regular Bohr sets} that
dense Sidon subsets of \([N]^d\) are 
equidistributed
in regular Bohr subsets of \([N]^d\).
Let \(\nu\) be a nonnegative function on \([N]^d\) and 
the Fourier uniform norm of \(\nu - \mathbf{1}_{[N]^d}\) is small.
The dense model lemma in \cref{Dense model lemma} states that  
for every nonnegative functions \(f_i\) with \(f_1 + \dots + f_r \leq \nu\)
there exist nonnegative functions \(g_1, \dots, g_r\) 
with \(g_1 + \dots + g_r \leq \mathbf{1}_{[N]^d}\)
such that the Fourier uniform norm of \(f_i - g_i\) is small 
for all \(1 \leq i \leq r\).

Using the above Fourier uniformity result, 
the dense model lemma,
and \cref{Robertson higher dimensional Rado}, we prove the following
\(d\)-dimensional analog of \cite[Theorem~1.15]{OrtegaPrendiville_extremal-Sidon-sets}.

\begin{theorem}[Partition regularity over Sidon sets] \label{Partition regularity}
  Let \(s \geq 5\) and \(d\) be positive integers and 
  \(a_1, \dots, a_s\) be nonzero integers with 
  \(\sum_{i \in I} a_i = 0\) for some nonempty subset \(I \subseteq [s]\).
  Then for every positive integer \(r\) there exist positive constants 
  \(c,N_0\) and \(\delta < 1\), all depending on 
  \(d, r, s, a_1, \dots, a_s\),
  such that for every \(N \geq N_0\) and every Sidon subset \(S\) of \([N]^d\),
  at least one of the following holds.
  \begin{enumerate}
    \item \(\displaystyle 
      |S| \leq \delta N^{d/2}.\)
    \item For any \(r\)-coloring of \(S\) there exists a color class \(C\) such that 
      \[
        \sum_{a_1 x_1 + \dots + a_s x_s = 0}
        \mathbf{1}_C(x_1) \cdots \mathbf{1}_C(x_s) 
        \geq 
        c |S|^s N^{-d}.
      \]
  \end{enumerate}
\end{theorem}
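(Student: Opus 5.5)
The plan follows the transference strategy of Ortega and Prendiville. Set \(\nu = \frac{N^{d/2}}{|S|}\cdot N^{d/2}\mathbf{1}_S\), or more simply \(\nu = \frac{N^d}{|S|}\mathbf{1}_S\), so that \(\nu\) has total mass \(N^d\). Given an \(r\)-coloring \(S = C_1 \cup \dots \cup C_r\), put \(f_i = \nu \mathbf{1}_{C_i}\), so that \(f_1+\dots+f_r = \nu\). By \cref{Fourier uniformity}, \(\|\widehat{\nu} - \widehat{\mathbf{1}}_{[N]^d}\|_\infty \ll_d N^{d}\bigl(|S|/N^{d/2}\bigr)^{-1}\bigl(|1 - |S|/N^{d/2}| + N^{-d/(2(1+2d))}\bigr)^{1/2}\). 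This is at most \(\eta N^d\) once \(|S| > \delta N^{d/2}\) with \(\delta\) close to \(1\) and \(N\) large; the upper bound \(|S| \le N^{d/2}+O(\cdot)\) of Lindstr\"om/Cilleruelo keeps \(|S|/N^{d/2}-1\) small from both sides. The dense model lemma (\cref{Dense model lemma}) then gives \(g_1,\dots,g_r \ge 0\) with \(\sum g_i \le \mathbf{1}_{[N]^d}\) and \(\|\widehat{f_i}-\widehat{g_i}\|_\infty \le \varepsilon N^d\), where \(\varepsilon = \varepsilon(\eta) \to 0\).

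Let \(T(h_1,\dots,h_s) = \sum_{a\cdot x = 0} h_1(x_1)\cdots h_s(x_s) = \int_{\mathbb{T}^d} \prod_j \widehat{h_j}(a_j\alpha)\,d\alpha\). We need a lower bound \(T(g_i,\dots,g_i) \ge c_0 N^{d(s-1)}\) for some \(i\). \cref{Robertson higher dimensional Rado} alone handles colorings (0/1 functions), whereas the \(g_i\) are fractional. To reduce to colorings, we choose a random coloring of \([N]^d\): color \(x\) with color \(i\) with probability \(g_i(x)\), and with an extra color \(r+1\) with probability \(1-\sum g_i(x)\). Expanding the expectation of the monochromatic count over distinct \(x_j\) gives \(\mathbb{E}\,T(\mathbf{1}_{C'_i},\dots) = T(g_i,\dots,g_i) + O(N^{d(s-2)})\), the error coming from degenerate solutions. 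Color \(r+1\) must be excluded. The cleanest route is the standard trick of replacing \(g_i\) by \(g_i/\sum_j g_j\) where \(\sum_j g_j \ge 1/2\); since \(\sum f_i = \nu\) has \(\widehat{\nu}\) close to \(\widehat{\mathbf{1}}\), \(\sum g_i\) is Fourier-close to \(\mathbf{1}_{[N]^d}\), hence close in \(L^1\) by the usual Fourier/positivity argument. Alternatively we can use the supersaturated (density-robust) form of Rado's theorem on the set where \(\sum g_i\) is large. I would first try the version of the dense model lemma that outputs \(\sum g_i = \mathbf{1}_{[N]^d}\) exactly, as in Ortega--Prendiville, which makes the random-coloring argument immediate. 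Here the single equation has \(t=1\) and satisfies the columns condition by the hypothesis on \(I\).

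The last step is a counting lemma: \(|T(f_i,\dots,f_i) - T(g_i,\dots,g_i)| \le \tfrac{c_0}{2} N^{d(s-1)}\). Telescoping, each difference term is \(\int \widehat{h_1}(a_1\alpha)\cdots \widehat{(f_i-g_i)}(a_j\alpha)\cdots d\alpha\). We bound it by \(\|\widehat{f_i}-\widehat{g_i}\|_\infty^{s-4}\), paired with the \(L^4\) norms of the remaining factors via H\"older, after a change of variables \(\alpha \mapsto a_j\alpha\) that costs only constants depending on the \(a_j\). The key input, and in my view the main obstacle, is a restriction estimate \(\int_{\mathbb{T}^d}|\widehat{\nu}|^4 \ll N^{3d}\), together with the same for \(f_i\) and \(g_i\). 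For \(g_i\) this is trivial by Parseval, \(\int|\widehat{g}|^4 \le N^{d}\cdot\|\widehat g\|_\infty^2\)-type bounds. For \(f_i\), the quantity \(\int|\widehat{f_i}|^4\) counts additive quadruples weighted by \(\nu\), and the Sidon property gives \(\le 2|S|^2 (N^d/|S|)^4 \ll N^{3d}\) precisely when \(|S|\gg N^{d/2}\). This is exactly why \(s\ge5\) is needed: one factor carries \(\varepsilon N^d\), four factors carry \(L^4\) bounds, and the remaining \(s-5\) factors are bounded trivially by \(N^d\), giving an error of \(\ll \varepsilon N^{d(s-1)}\). Choosing \(\varepsilon\) small in terms of \(c_0\) forces \(T(f_i,\dots,f_i) \ge \tfrac{c_0}{2}N^{d(s-1)}\), that is, \(T(\mathbf{1}_{C_i},\dots) \ge \tfrac{c_0}{2}(|S|/N^d)^s N^{d(s-1)} = c|S|^sN^{-d}\). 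This fixes \(\delta\), and \(N_0\) must also be large enough that the \(N^{-d/(2(1+2d))}\) term and the degenerate solutions are negligible.
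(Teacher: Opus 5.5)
Your proposal is correct and is essentially the paper's argument run forwards rather than by contradiction: Fourier uniformity makes \(\widehat{\nu}\) close to \(\widehat{\mathbf{1}}_{[N]^d}\), \cref{Dense model lemma} supplies the \(g_i\), Robertson's theorem is transferred to the weights \(g_i\) (the paper does this deterministically by choosing \(i_n\) with \(g_{i_n}(n)\geq 1/r\), which is lighter than your random coloring), and the counting step pairs a single factor \(\|\widehat{f}_i-\widehat{g}_i\|_\infty\) with the Sidon fourth-moment bound, which is exactly where \(s\geq 5\) enters (the exponent \(s-4\) you first attach to that factor is a slip; your later accounting of one sup-norm factor, four \(L^4\) factors and \(s-5\) trivial factors is the right one). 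The one substantive difference is in your favour: \cref{Dense model lemma} only delivers \(\sum_i g_i\leq \mathbf{1}_{[N]^d}\) whereas \cref{Counting monochromatic solutions} assumes equality, a mismatch the paper passes over, and your observation closes it: evaluating \(\widehat{\sum_i g_i}-\widehat{\mathbf{1}}_{[N]^d}\) at \(\alpha=0\) and using positivity gives \(\|\mathbf{1}_{[N]^d}-\sum_i g_i\|_1\leq(\eta+r\varepsilon)N^d\).
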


\subsection{Outline}

In \cref{Sec: Preliminaries}, we introduce required notations and state 
some preliminary results which are used in the subsequent sections.

In \cref{Sec: Fourier uniformity},
we state a higher dimensional version of
the van der Corput inequality in \cref{van der Corput differencing},
and provide a proof for completeness.
A lower bound for the sum of Fej\'er weights
over the nonzero elements of the difference set 
of a Sidon subset of \([N]^d\) is obtained in 
\cref{Lower bound for Fejer}.
Using these results, 
we establish in \cref{Fourier uniformity} that
Sidon subsets of \([N]^d\) are Fourier uniform.
In particular, given any Sidon subset \(S\) of \([N]^d\),
we obtain an upper bound for the Fourier uniform norm of the difference
between \(\mathbf{1}_S\) and a normalization of \(\mathbf{1}_{[N]^d}\)
by estimating a cross-correlation between the functions 
\(f_1\) and \(f_2\), for all 
\(f_1, f_2 \in \{ \mathbf{1}_S, \mathbf{1}_{[N]^d} \}\).

In \cref{Sec: Dense model lemma}, we prove a dense model lemma 
stated in \cref{Dense model lemma}.
It shows that the Fourier uniform norm of the difference
of \(\mathbf{1}_{[N]^d}\) and 
a nonnegative function supported on \([N]^d\) is large,
under certain condition on the Fourier uniform norm 
of the decomposition of these functions.
We define a norm \(\left\| \cdot \right\|^*\) for functions on \([N]^d\) and 
state some properties of this norm in \cref{properties of *norm}.
To prove \cref{Dense model lemma}, 
we construct a nonempty, closed and convex set
and apply a hyperplane separation theorem, stated in
\cref{Supporting hyperplane theorem}, to obtain certain functions
on \([N]^d\) with suitable norm properties.
Using the properties of \(*\)-norm 
and the Stone--Weierstrass theorem, 
we conclude our proof of the dense model lemma.

We prove \cref{Partition regularity}
in \cref{Sec: Partition regularity}.
We first prove \cref{Counting monochromatic solutions} 
using the pigeonhole principle and 
\cref{Robertson higher dimensional Rado}. 
It provides a lower bound for the weighted count of the number of
solutions to a given partition regular linear equation,
where the weight function is a certain component of a given 
decomposition of \(\mathbf{1}_{[N]^d}\) into nonnegative functions.
Finally, we prove \cref{Partition regularity} which shows that 
for all large \(N\), any finite coloring of a \(\delta\)-dense Sidon subset of \([N]^d\)
with sufficiently large \(\delta < 1\),
contains many monochromatic solutions to any given 
partition regular linear equation in at least five variables
and with nonzero coefficients.
To prove \cref{Partition regularity},
we assume that 
\(N\) is a sufficiently large integer, 
and 
for some \(r\)-coloring of a Sidon subset \(S\) of \([N]^d\), 
a given partition regular linear equation 
in at least five variables and with nonzero coefficients 
does not admit many monochromatic solutions in \(S\). 
Then using \cref{Counting monochromatic solutions,Fourier uniformity,Dense model lemma}
we show that there exists \(0 < \delta < 1\), 
independent of \(N\), such that
the Sidon set \(S\) is not \(\delta\)-dense.

In \cref{Sec: Equidistribution of dense Sidon sets},
we show some equidistribution properties of dense Sidon subsets of \([N]^d\).
Given a Sidon subset \(S\) of \([N]^d\) and a translation \(P\) of an axis-aligned rectangular 
sublattice of \(\mathbb{Z}^d\), we show in \cref{uniformity of Sidon in progression},
using \cref{Fourier uniformity}, that 
the difference of \(|S \cap P|\)
and \(|S| |P \cap [N]^d| / N^d\) is small.
Further, using \cref{Fourier uniformity} we prove that 
dense Sidon subsets of \([N]^d\) are equidistributed in regular Bohr sets.

\section{Notations and preliminaries} 
\label{Sec: Preliminaries}
Throughout the article, \(d\) and \(N\) denote positive integers.
The set 
\[
  \left\{  
    (n_1, \dots, n_d) \in \mathbb{Z}^d : 
    1 \leq n_i \leq N \text{ for all } 1 \leq i \leq d
  \right\}
\]
is denoted by \([N]^d\).
The additive group \(\mathbb{R}/\mathbb{Z}\) is denoted by \(\mathbb{T}\),
and we identify \(\mathbb{T}\) with the interval \([0,1)\)
equipped with addition modulo \(1\).
We consider \(\mathbb{T}\) to be equipped with the Haar probability measure.
For any elements \(n \coloneq (n_1, \dots, n_d) \in \mathbb{Z}^d\)
and \(\alpha \coloneq (\alpha_1, \dots, \alpha_d) \in \mathbb{T}^d\) 
we write 
\[
  \left\| \alpha \right\|
  \coloneq
  \max_{i} \min \left\{ \alpha_i, 1 - \alpha_i \right\}
\]
and
\[
  n \cdot \alpha 
  \coloneq
  n_1 \alpha_1 + \dots + n_d \alpha_d.
\]
For any \(x \in \mathbb{R}\) we denote \(e^{2 \pi i x}\) by \(e(x)\).

Let \(f,g\) be finitely supported complex-valued functions on \(\mathbb{Z}^d\).
The convolution of \(f\) and \(g\) is defined as 
\[
  f \ast g (x)
  \coloneq
  \sum_{n \in \mathbb{Z}^d}
  f(n) g(x - n),
\]
the Fourier transform \(\widehat{f} : \mathbb{T}^d \to \mathbb{C}\) 
of \(f\) is defined as 
\[
  \widehat{f}(\alpha)
  \coloneq
  \sum_{n \in \mathbb{Z}^d}
  f(n) e(n \cdot \alpha),
\]
and for any real number \(p \geq 1\), the \(p\)-norm of \(f\) is defined as 
\[
  \left\| f \right\|_p
  \coloneq
  \left(  
    \sum_{n \in \mathbb{Z}^d}
    \left| f(n) \right|^p
  \right)^{1/p}.
\]
The Fourier transform of an
integrable function \(h : \mathbb{T}^d \to \mathbb{C}\) is given by 
\[
  \widehat{h}(n) 
  \coloneq
  \int_{\mathbb{T}^d}
  h(\alpha) e(-n \cdot \alpha) 
  \mathrm{d} \alpha.
\]
We denote the supremum norm by \(\left\| \cdot \right\|_\infty\).

Combining the results of Cilleruelo 
\cite[Theorem 1, Theorem 5]{Cilleruelo_Sidon_sets_in_N^d}
implies the following result about Sidon sets.

\begin{lemma} \label{size of Sidon sets}
  Let \(F(N_1, \dots, N_d)\) denote the maximum possible size 
  of a Sidon subset of the rectangle \([N_1] \times \dots \times [N_d]\).
  Then we have 
  \[
    F(N_1, \dots, N_d) = (1 + o(1))(N_1 \cdots N_d)^{1/2}.
  \]  
\end{lemma}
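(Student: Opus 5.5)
The statement of \cref{size of Sidon sets} combines an upper bound and a matching lower bound, and I will derive the two separately from the cited results of Cilleruelo. Write \(M = N_1 \cdots N_d\). The \(o(1)\) is understood as \(M \to \infty\) with \(d\) fixed.

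\emph{Upper bound.} By \cite[Theorem 1]{Cilleruelo_Sidon_sets_in_N^d}, every Sidon subset of \([N_1] \times \dots \times [N_d]\) has size at most \(M^{1/2} + O_d(M^{d/(2d+2)})\). Since \(d/(2d+2) < 1/2\), the error term is \(o(M^{1/2})\). This gives \(F(N_1,\dots,N_d) \leq (1+o(1))M^{1/2}\).

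\emph{Lower bound.} I will use the construction of \cite[Theorem 5]{Cilleruelo_Sidon_sets_in_N^d}, which gives Sidon sets of size \((1+o(1))M^{1/2}\) in boxes of the appropriate shape. The natural route is to take a Singer/Bose type Sidon set modulo an integer \(m\) close to \(M\). One transfers it to the box via the Chinese remainder theorem, or via a mixed-radix bijection \(\mathbb{Z}_m \cong \mathbb{Z}_{m_1} \times \dots \times \mathbb{Z}_{m_d}\) with \(m_i \le N_i\). An additive Sidon set in \(\mathbb{Z}_{m_1}\times\dots\times\mathbb{Z}_{m_d}\) remains Sidon in \(\mathbb{Z}^d\) after lifting to representatives in \([m_1]\times\dots\times[m_d]\), because an integer relation implies the modular one.

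The main obstacle is that the moduli \(m_i\) must be pairwise coprime or have a suitable cyclic structure, and must satisfy \(\prod m_i = (1+o(1))M\) while \(m_i \le N_i\). When some \(N_i\) are small this is delicate. I expect Cilleruelo's Theorem 5 handles exactly this by choosing \(m_i\) via prime density, taking \(m_i\) prime in \([(1-\varepsilon)N_i, N_i]\) for large \(N_i\). If some \(N_i\) stay bounded, I would fall back on one-dimensional Sidon sets in a product modulus, for instance by embedding a dense Sidon set of an interval via a suitable linear map.

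Combining the two bounds yields the lemma. Essentially the proof amounts to citing the two theorems and observing that the error exponent \(d/(2d+2)\) is less than \(1/2\).
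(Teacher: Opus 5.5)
Your upper bound is the paper's argument: Cilleruelo's Theorem~1 together with \(d/(2d+2)<1/2\). For the lower bound the paper also just cites Theorem~5, so at the level of citations you agree with it. The construction you offer to support that citation, however, does not work.

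\emph{The gap.} A mixed-radix bijection \([0,m_1)\times\dots\times[0,m_d)\to\mathbb{Z}_m\) is not a group isomorphism \(\mathbb{Z}_m\cong\mathbb{Z}_{m_1}\times\dots\times\mathbb{Z}_{m_d}\); that requires pairwise coprime \(m_i\) and the Chinese remainder theorem. Taking the \(m_i\) to be primes in \([(1-\varepsilon)N_i,N_i]\) does not yield a usable modulus either. The Singer, Bose and Ruzsa constructions give Sidon sets of size \((1+o(1))m^{1/2}\) in \(\mathbb{Z}_m\) only for \(m\) of the special forms \(q^2+q+1\), \(q^2-1\), \(p^2-p\). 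A product of independently chosen primes is not of that form. So the step ``a dense Sidon set modulo some \(m\approx M\) that factors as required'' is unsupported, and your fallback for bounded \(N_i\) is too vague to check.

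\emph{The fix.} None of this is needed. You already state the right principle: a relation in \(\mathbb{Z}^d\) maps to a relation in the target group. Apply it with target \(\mathbb{Z}\) instead of \(\mathbb{Z}_m\). Let
\(L(x)=x_1+N_1x_2+N_1N_2x_3+\dots+N_1\cdots N_{d-1}x_d\). This is a homomorphism \(\mathbb{Z}^d\to\mathbb{Z}\). By uniqueness of mixed-radix digits it is injective on \(R=[N_1]\times\dots\times[N_d]\) and maps \(R\) onto \(M\) consecutive integers. If \(B\) is a Sidon set in that interval, then \(A=L^{-1}(B)\cap R\) is Sidon with \(|A|=|B|\). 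Indeed, \(x+y=z+w\) with \(x,y,z,w\in A\) gives \(L(x)+L(y)=L(z)+L(w)\), hence \(\{L(x),L(y)\}=\{L(z),L(w)\}\), hence \(\{x,y\}=\{z,w\}\) by injectivity. Therefore \(F(N_1,\dots,N_d)\ge F(M)\), where \(F(M)\) is the one-dimensional maximum, and \(F(M)\ge(1-o(1))M^{1/2}\) by Singer's theorem.

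Note the direction: the map goes from \(\mathbb{Z}^d\) to \(\mathbb{Z}\), and you pull Sidon sets back along it, not forward. This argument is uniform in the shape of the box and needs no coprimality or case distinction for bounded \(N_i\). It also covers genuinely non-cubical boxes, whereas the existence result of Cilleruelo quoted in the paper's introduction is stated only for \([N]^d\).
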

 
\section{Fourier uniformity of Sidon sets} 
\label{Sec: Fourier uniformity}

For any positive integer \(H\), consider the Fej\'er weight function 
\(\mathcal{F}_H^d : \mathbb{Z}^d \to [0,1]\) defined as
\[
  \mathcal{F}_H^d(h) 
  \coloneq
  \frac{(\mathbf{1}_{[H]^d} \ast \mathbf{1}_{-[H]^d})(h)}{H^{2d}}.
\]
Observe that for any \(h \coloneq (h_1, \dots, h_d) \in \mathbb{Z}^d\) we have 
\[
  \mathcal{F}_H^d(h) 
  = 
  \prod_{i=1}^{d}
  \frac{(\mathbf{1}_{[H]} \ast \mathbf{1}_{-[H]})(h_i)}{H^{2}}
  = 
  \begin{cases} \displaystyle
    \frac{1}{H^d} 
    \prod_{i=1}^{d}
    \left(  
      1 - \frac{|h_i|}{H}
    \right) 
    &\text{if } h \in [-H,H]^d\\
    0 &\text{otherwise}.
  \end{cases}
\]

We state the following higher-dimensional version of the van~der~Corput inequality,
and provide a proof for the sake of completeness.

\begin{lemma}[van~der~Corput differencing] \label{van der Corput differencing}
  Let \(1 \leq H \leq N\) be integers and \(f : \mathbb{Z}^d \to \mathbb{C}\)
  be a function supported inside \([N]^d\). Then 
  \[
    \left| \sum_{n \in \mathbb{Z}^d} f(n) \right|^2 
    \leq
    \left( N + H \right)^d
    \sum_{h \in \mathbb{Z}^d} \mathcal{F}_H^d(h)
    \sum_{n \in \mathbb{Z}^d} 
    f(n) \overline{f(n + h)}.
  \]
\end{lemma}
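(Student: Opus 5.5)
We follow the classical one-dimensional argument: average over shifts, apply Cauchy--Schwarz, and expand the square. The first step is to write the total sum as an average of shifted sums. For every \(m \in [H]^d\) we have \(\sum_{n} f(n) = \sum_{n} f(n+m)\). Averaging over \(m \in [H]^d\) gives
\[
  \sum_{n \in \mathbb{Z}^d} f(n)
  =
  \frac{1}{H^d}
  \sum_{n \in \mathbb{Z}^d}
  \sum_{m \in [H]^d} f(n+m).
\]
The inner sum vanishes unless \(n + m \in [N]^d\) for some \(m \in [H]^d\). Hence the outer sum is supported on the box \(\prod_{i=1}^d [1-H, N-1]\), which contains \((N+H-1)^d \leq (N+H)^d\) lattice points.

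Next we apply Cauchy--Schwarz to the outer sum over this support:
\[
  \left| \sum_{n} f(n) \right|^2
  \leq
  \frac{(N+H)^d}{H^{2d}}
  \sum_{n \in \mathbb{Z}^d}
  \left| \sum_{m \in [H]^d} f(n+m) \right|^2 .
\]

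Then we expand the square as \(\sum_{m, m' \in [H]^d} \sum_n f(n+m)\overline{f(n+m')}\). Substituting \(n' = n+m\) and \(h = m' - m\), each pair contributes \(\sum_{n'} f(n')\overline{f(n'+h)}\). The number of pairs \((m,m') \in [H]^d \times [H]^d\) with \(m'-m = h\) is exactly \((\mathbf{1}_{[H]^d} \ast \mathbf{1}_{-[H]^d})(h)\), since this counts the \(m' \in [H]^d\) with \(m' - h \in [H]^d\). Dividing by \(H^{2d}\) turns this count into \(\mathcal{F}_H^d(h)\), and we obtain the stated inequality.

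Finally, the right-hand side is a nonnegative real number, since it equals \(H^{-2d}\sum_n |\sum_m f(n+m)|^2\). So the inequality makes sense as a comparison of reals. No step is a genuine obstacle. The only points needing care are the support count, which gives \((N+H-1)^d\) and is bounded by \((N+H)^d\), and the bookkeeping that matches the pair count with the convolution defining \(\mathcal{F}_H^d\). Note also that the hypothesis \(H \leq N\) is not actually needed for the argument.
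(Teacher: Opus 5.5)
Your proposal is correct and follows essentially the same route as the paper: average over shifts by elements of $[H]^d$, apply Cauchy--Schwarz over a box of at most $(N+H)^d$ points, then expand the square and identify the number of pairs with difference $h$ as $(\mathbf{1}_{[H]^d}\ast\mathbf{1}_{-[H]^d})(h)$. The only differences are cosmetic: you shift by $+m$ and use the support count $(N+H-1)^d$, whereas the paper shifts by $-h$ and sums over $n\in[N+H]^d$.
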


\begin{proof}
  Using the Cauchy--Schwarz inequality and applying a change of variable
  observe that 
  \begin{align*}
    \left| \sum_{n \in \mathbb{Z}^d} f(n) \right|^2
    &=
    \frac{1}{H^{2d}}
    \left| \sum_{h \in [H]^d} \sum_{n \in [N + H]^d} f(n - h) \right|^2 \\
    &\leq
    \frac{(N + H)^d}{H^{2d}}
    \sum_{n \in [N + H]^d} \left| \sum_{h \in [H]^d} f(n - h) \right|^2 \\
    &=
    \frac{(N + H)^d}{H^{2d}}
    \sum_{n \in [N + H]^d} \sum_{h, h' \in [H]^d} f(n - h) \overline{f(n - h')} \\
    &=
    \frac{(N + H)^d}{H^{2d}}
    \sum_{h, h' \in [H]^d} \sum_{n \in [N]^d} f(n) \overline{f(n + h - h')} \\
    &=
    \frac{(N + H)^d}{H^{2d}}
    \sum_{h \in [-H,H]^d} 
    \left( \mathbf{1}_{[H]^d} \ast \mathbf{1}_{-[H]^d} \right)(h)
    \sum_{n \in [N]^d} f(n) \overline{f(n + h)} \\
    &=
    \left( N + H \right)^d
    \sum_{h \in \mathbb{Z}^d} \mathcal{F}_H^d(h)
    \sum_{n \in \mathbb{Z}^d} 
    f(n) \overline{f(n + h)}.
  \end{align*}
  Hence, \cref{van der Corput differencing} is proved.
\end{proof}
The following result yields a lower bound for \(\sum_{h} \mathcal{F}_H^d(h)\)
when the sum is taken over the difference set of a Sidon subset of \([N]^d\).

\begin{lemma} \label{Lower bound for Fejer}
  For any integers \(1 \leq H \leq N\), 
  every nonempty Sidon subset \(S\) of \([N]^d\) satisfies 
  \[
    \sum_{h \in (S-S) \setminus \left\{ 0 \right\}}
    \mathcal{F}_H^d(h)
    \geq 
    \frac{|S|^2}{(N + H)^d} - 
    \frac{|S|}{H^d}.
  \]
\end{lemma}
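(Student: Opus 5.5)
We apply \cref{van der Corput differencing} to \(f = \mathbf{1}_S\), which is supported inside \([N]^d\). Since \(\sum_n f(n) = |S|\), this gives
\[
  |S|^2 \leq (N+H)^d \sum_{h \in \mathbb{Z}^d} \mathcal{F}_H^d(h)\, r(h),
  \qquad
  r(h) \coloneq \sum_{n} \mathbf{1}_S(n)\mathbf{1}_S(n+h) = |S \cap (S-h)|,
\]
where \(r(h)\) counts the pairs \((s,s') \in S^2\) with \(s' - s = h\). The inner sum is real and nonnegative here, so no absolute values are lost.

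Next we split the sum over \(h\) into \(h = 0\) and \(h \neq 0\). For \(h = 0\) we have \(r(0) = |S|\), and from the explicit formula \(\mathcal{F}_H^d(0) = H^{-d}\). So this term contributes exactly \(|S|/H^d\).

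For \(h \neq 0\) we use the Sidon property. If \(s_1' - s_1 = s_2' - s_2 = h\), then \(s_1' + s_2 = s_2' + s_1\), so \(\{s_1', s_2\} = \{s_2', s_1\}\). The option \(s_1' = s_1\) would force \(h = 0\), so necessarily \(s_1' = s_2'\) and \(s_1 = s_2\). Hence \(r(h) \leq 1\) for \(h \neq 0\), and \(r(h) = 0\) unless \(h \in S - S\). Because \(\mathcal{F}_H^d \geq 0\), we get
\[
  \sum_{h \neq 0} \mathcal{F}_H^d(h)\, r(h) \leq \sum_{h \in (S-S)\setminus\{0\}} \mathcal{F}_H^d(h).
\]

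Combining the two pieces yields
\[
  |S|^2 \leq (N+H)^d\Bigl( \frac{|S|}{H^d} + \sum_{h \in (S-S)\setminus\{0\}} \mathcal{F}_H^d(h)\Bigr).
\]
Dividing by \((N+H)^d\) and rearranging gives the stated bound. There is no real obstacle. The only point needing care is the Sidon argument that \(r(h) \leq 1\) for \(h \neq 0\), in particular ruling out the degenerate matching \(s_1' = s_1\).
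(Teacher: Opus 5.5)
Your proof is correct and is essentially the paper's argument. The paper's Cauchy--Schwarz step on $\sum_h (\mathbf{1}_S \ast \mathbf{1}_{[H]^d})(h)$, over a support of size at most $(N+H)^d$, is exactly the proof of \cref{van der Corput differencing} specialised to $f = \mathbf{1}_S$, so citing that lemma just packages it more tidily. Likewise, your bound $r(h) \leq 1$ for $h \neq 0$ is the paper's evaluation of $\mathbf{1}_S \ast \mathbf{1}_{-S}$; your $r(h)$ equals $(\mathbf{1}_S \ast \mathbf{1}_{-S})(-h)$, which is harmless because $\mathcal{F}_H^d$ and $S-S$ are symmetric.
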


\begin{proof}
  Let \(1 \leq H \leq N\) be integers and 
  \(S \subseteq [N]^d\) be a nonempty Sidon set. 
  Note that 
  \[
    (\mathbf{1}_S \ast \mathbf{1}_{-S})(h)
    =
    \begin{cases}
      |S| &\text{if } h = 0 \\
      1 &\text{if } h \in (S-S) \setminus \left\{ 0 \right\} \\
      0 &\text{otherwise}.
    \end{cases} 
  \]
  It follows that 
  \begin{align*}
    \sum_{h \in (S-S) \setminus \left\{ 0 \right\}}
    \mathcal{F}_H^d(h)
    &=
    \sum_{h \in \mathbb{Z}^d}
    \mathcal{F}_H^d(h) (\mathbf{1}_S \ast \mathbf{1}_{-S})(h) - 
    \mathcal{F}_H^d(0) |S| \\
    &= 
    \frac{1}{H^{2d}} 
    \sum_{h \in \mathbb{Z}^d}
    (\mathbf{1}_{[H]^d} \ast \mathbf{1}_{-[H]^d})(h)
    (\mathbf{1}_S \ast \mathbf{1}_{-S})(h) - 
    \frac{|S|}{H^d} \\
    &= 
    \frac{1}{H^{2d}} 
    \sum_{h \in \mathbb{Z}^d}
    \left( (\mathbf{1}_S \ast \mathbf{1}_{[H]^d})(h) \right)^2 - 
    \frac{|S|}{H^d}.
  \end{align*}
  By the Cauchy--Schwarz inequality
  \begin{align*}
    \left( 
      \sum_{h \in \mathbb{Z}^d}
      (\mathbf{1}_S \ast \mathbf{1}_{[H]^d})(h) 
    \right)^2
    &\leq 
    \left( \sum_{h \in S + [H]^d} 1  \right)
    \left( 
      \sum_{h \in \mathbb{Z}^d}
      \left( (\mathbf{1}_S \ast \mathbf{1}_{[H]^d})(h) \right)^2  
    \right) \\
    &\leq 
    \left( N + H  \right)^d
    \sum_{h \in \mathbb{Z}^d}
    \left( (\mathbf{1}_S \ast \mathbf{1}_{[H]^d})(h) \right)^2.
  \end{align*}
  Observing that 
  \[
    \sum_{h \in \mathbb{Z}^d}
    (\mathbf{1}_S \ast \mathbf{1}_{[H]^d})(h) 
    =
    |S| H^d
  \]
  proves the result.
\end{proof}

Now we establish \cref{Fourier uniformity} using 
\cref{Lower bound for Fejer,van der Corput differencing}. 
It shows that the Fourier uniform norm of the difference of normalized
indicator function of \([N]^d\) and that of a Sidon subset of \([N]^d\) 
is bounded, in terms of \(N\) and the difference between 
the size of the considered Sidon set and \(N^{d/2}\).

\begin{proof} [Proof of \cref{Fourier uniformity}]
  Let \(S \subseteq [N]^d\) be a nonempty Sidon set. Take the functions 
  \begin{align*}
    f_1 \coloneq \mathbf{1}_S
    \text{\quad and \quad}
    f_2 \coloneq \frac{|S|}{N^d} \mathbf{1}_{[N]^d}.
  \end{align*}
  Let \(H \leq N\) be a positive integer.
  The triangle inequality and 
  \cref{van der Corput differencing} implies that 
  \begin{align*}
    \left\|  
      \widehat{f}_1 - \widehat{f}_2
    \right\|_\infty^2
    &=
    \sup_{\alpha \in \mathbb{T}^d}
    \left| 
      \sum_{n \in \mathbb{Z}^d} \left( f_1 - f_2 \right)(n) 
      e(n \cdot \alpha)
    \right|^2 \\
    &\leq
    \left( N + H \right)^d
    \sum_{h \in \mathbb{Z}^d} \mathcal{F}_H^d(h)
    \left|  
      \sum_{n \in \mathbb{Z}^d} 
      \left( f_1(n) - f_2(n) \right) \left( f_1(n + h) - f_2(n + h) \right)
    \right| \\
    &\leq
    \left( N + H \right)^d
    \sum_{h \in \mathbb{Z}^d} \mathcal{F}_H^d(h)
    \sum_{i,j \in \left\{ 1,2 \right\}}
    \left|  
      \frac{|S|^2}{N^d} - \sum_{n \in \mathbb{Z}^d} f_i(n) f_j(n + h)
    \right|.
  \end{align*}
  Let \(h \coloneq (h_1, \dots, h_d) \in \mathbb{Z}^d\) with \(|h_i| \leq H\)
  for all \(1 \leq i \leq d\). 
  Note that \([N]^d \setminus ([N]^d - h)\) can be written as the union of 
  \(d\) rectangles of size at most \(H N^{d-1}\).  
  Cilleruelo's upper bound for Sidon sets \cite[Theorem 1]{Cilleruelo_Sidon_sets_in_N^d} 
  implies that there exists a constant \(C' \geq 1\) depending on \(d\) 
  such that 
  \[
    |S| \leq C' N^{d/2}
    \quad \text{and} \quad
    \left| S \setminus ([N]^d - h) \right|
    \leq
    C' d (H N^{d-1})^{1/2}.
  \] 
  Consequently, we obtain that
  \begin{align} \label{f_1.f_2}
    \left| \frac{|S|^2}{N^d} - \sum_{n \in \mathbb{Z}^d} f_1(n) f_2(n + h) \right|
    =
    \frac{|S|}{N^{d}}
    \sum_{n \in \mathbb{Z}^d} 
    \left( \mathbf{1}_S - \mathbf{1}_{S \cap ([N]^d - h)} \right)(n)
    \leq 
    \frac{C' d H^{1/2} |S|}{N^{(d+1)/2}}
  \end{align}
  holds.
  Similarly, 
  \begin{align} \label{f_2.f_1}
    \left| \frac{|S|^2}{N^d} - \sum_{n \in \mathbb{Z}^d} f_2(n) f_1(n + h) \right|
    =
    \frac{|S|}{N^{d}}
    \sum_{n \in \mathbb{Z}^d} 
    \left( \mathbf{1}_S - \mathbf{1}_{S \cap ([N]^d + h)} \right)(n) 
    \leq 
    \frac{C' d H^{1/2} |S|}{N^{(d+1)/2}}
  \end{align}
  and
  \begin{align} \label{f_2.f_2}
    \left| \frac{|S|^2}{N^d} - \sum_{n \in \mathbb{Z}^d} f_2(n) f_2(n + h) \right|
    =
    \frac{|S|^2}{N^{2d}}
    \sum_{n \in \mathbb{Z}^d} 
    \left( \mathbf{1}_{[N]^d} - \mathbf{1}_{[N]^d \cap ([N]^d - h)} \right)(n)
    \leq 
    \frac{d H |S|^2}{N^{d+1}}
  \end{align}
  hold.
  Using \cref{Lower bound for Fejer} we deduce that 
  \begin{align} \label{f_1.f_1}
  \begin{aligned}
    \sum_{h \in \mathbb{Z}^d} \mathcal{F}_H^d(h)
    \left|  
      1 - \sum_{n \in \mathbb{Z}^d} f_1(n) f_1(n + h)
    \right|
    &=
    \sum_{h \in \mathbb{Z}^d} \mathcal{F}_H^d(h)
    \left| 1 - (\mathbf{1}_S \ast \mathbf{1}_{-S})(-h) \right|\\
    &=
    \frac{|S| - 1}{H^{d}} + 
    \sum_{h \in \mathbb{Z}^d \setminus \left\{ 0 \right\}} \mathcal{F}_H^d(h)
    \left( 1 - \mathbf{1}_{S-S}(h) \right) \\
    &\leq
    1 - \frac{|S|^2}{(N + H)^d} + 
    \frac{2|S|}{H^{d}} .
  \end{aligned}
  \end{align}
  Using \eqref{f_1.f_2}, \eqref{f_2.f_1}, \eqref{f_2.f_2} and \eqref{f_1.f_1}
  we get that
  \begin{multline*}
    \left\| 
      \widehat{\mathbf{1}}_S -
      \frac{|S|}{N^d} \widehat{\mathbf{1}}_{[N]^d}
    \right\|_\infty^2 \\
    \leq
    \left( N + H \right)^d
    \left(  
      1 - \frac{|S|^2}{(N + H)^d} +
      \frac{2|S|}{H^{d}} +
      \left| \frac{|S|^2}{N^d} - 1 \right| + 
      \frac{2C' d H^{1/2} |S|}{N^{(d+1)/2}} +
      \frac{d H |S|^2}{N^{d+1}} 
    \right).
  \end{multline*}
  Thus, taking \(H = \lfloor N^{\frac{1+d}{1+2d}} \rfloor\) 
  and using \(|S| \leq C' N^{d/2}\) yields that
  \begin{align*}
    &\left\| 
      \widehat{\mathbf{1}}_S -
      \frac{|S|}{N^d} \widehat{\mathbf{1}}_{[N]^d}
    \right\|_\infty^2 \\
    &\leq
    (2N)^d
    \left(  
      1 - \frac{|S|^2}{N^d + 2^d N^{d-1} H} +
      \left| \frac{|S|^2}{N^d} - 1 \right| +
      \frac{2C'^2d}{N^{\frac{d}{2(1+2d)}}} +
      \frac{C'^2d}{N^{\frac{d}{1+2d}}} + 
      \frac{2^{d+1} C'}{N^{\frac{d}{2(1+2d)}}} 
    \right)\\
    &\leq
    (2N)^d
    \left(  
      \frac{2^d N^{d-1} H}{N^d + 2^d N^{d-1} H} +
      \frac{N^d - |S|^2}{N^d + 2^d N^{d-1} H} +
      \left| \frac{|S|^2 - N^d}{N^d} \right| +
      \frac{2^{d+3} C'^2d}{N^{\frac{d}{2(1+2d)}}} 
    \right)\\
    &\leq
    (2N)^d
    \left(  
      \frac{4C' \left| |S| - N^{d/2} \right|}{N^{d/2}} +
      \frac{2^d}{N^{\frac{d}{(1+2d)}}} +
      \frac{2^{d+3} C'^2d}{N^{\frac{d}{2(1+2d)}}} 
    \right)\\
    &\ll_d 
    N^d 
    \left(  
      \frac{\left| |S| - N^{d/2} \right|}{N^{d/2}} +
      \frac{1}{N^{\frac{d}{2(1+2d)}}} 
    \right).
  \end{align*}
  This concludes \cref{Fourier uniformity}.
\end{proof}

\section{Dense model lemma}
\label{Sec: Dense model lemma}

For any function \(f : [N]^d \to \mathbb{C}\) define 
\[
  \left\| f \right\|^* 
  \coloneq
  \sup_{\substack{g : [N]^d \to \mathbb{C} \\ \left\| \widehat{g} \right\|_\infty \leq 1}}
  \left| 
    \sum_{n \in [N]^d} f(n) g(n)
  \right|.
\]

\begin{lemma} \label{properties of *norm}
  For any \(f, g : [N]^d \to \mathbb{C}\) the following hold.
  \begin{enumerate}
    \item \(\displaystyle 
      \left| \sum_{n \in [N]^d} f(n) g(n)\right| 
      \leq 
      \left\| f \right\|^*
      \left\| \widehat{g} \right\|_\infty\).
    \item \(\displaystyle 
      \left\| f g \right\|^* 
      \leq 
      \left\| f \right\|^* \left\| g \right\|^*\).
    \item \(\displaystyle 
      \left\| f \right\|_\infty 
      \leq 
      \left\| f \right\|^*\).
  \end{enumerate}
\end{lemma}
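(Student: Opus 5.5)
Property (1) follows directly from the definition. If \(\widehat{g} = 0\) then \(g = 0\) by Fourier inversion on \(\mathbb{T}^d\), and both sides vanish. Otherwise set \(\tilde g \coloneq g / \left\| \widehat{g} \right\|_\infty\). Then \(\|\widehat{\tilde g}\|_\infty = 1\), so \(\tilde g\) is admissible in the supremum defining \(\left\| f \right\|^*\). Hence
\[
  \left| \sum_{n \in [N]^d} f(n) g(n) \right|
  = \left\| \widehat{g} \right\|_\infty
  \left| \sum_{n \in [N]^d} f(n) \tilde g(n) \right|
  \leq \left\| f \right\|^* \left\| \widehat{g} \right\|_\infty .
\]

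Property (3) comes from testing against a character. For \(m \in [N]^d\), let \(g = \mathbf{1}_{\{m\}}\). Then \(\widehat{g}(\alpha) = e(m \cdot \alpha)\), so \(\left\| \widehat{g} \right\|_\infty = 1\) and \(g\) is admissible. This gives \(|f(m)| = \left| \sum_n f(n) g(n) \right| \leq \left\| f \right\|^*\). Taking the maximum over \(m\) yields \(\left\| f \right\|_\infty \leq \left\| f \right\|^*\).

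Property (2) is the main step; I will deduce it from the algebra property \(\|\widehat{uv}\|_\infty \leq \|\widehat u\|_1\|\widehat v\|_\infty\) together with a dual description of \(\left\|\cdot\right\|^*\). First, I claim that \(\left\| f \right\|^*\) equals the Wiener-algebra norm
\[
  \inf\Big\{ \textstyle\sum_k |c_k| : f(n) = \sum_k c_k e(n\cdot\theta_k) \text{ on } [N]^d\Big\},
\]
which is the minimal total mass of a measure \(\mu\) on \(\mathbb{T}^d\) with \(f = \check\mu\) on \([N]^d\). The inequality "\(\le\)" is immediate. Indeed, if \(f = \sum_k c_k e(n\cdot\theta_k)\), then
\[
  \sum_n f g = \sum_k c_k \widehat g(\theta_k),
\]
and this is at most \(\sum_k|c_k|\) whenever \(\|\widehat g\|_\infty\le 1\). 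The reverse inequality is Hahn--Banach duality. The space \(\{\widehat g\}\) of trigonometric polynomials with frequencies in \([N]^d\) is a finite-dimensional subspace of \(C(\mathbb{T}^d)\). The linear functional \(\widehat g\mapsto \sum_n f(n)g(n)\) has norm \(\left\| f \right\|^*\) on this subspace. It therefore extends to a functional on \(C(\mathbb{T}^d)\) of the same norm, and by Riesz this extension is a measure \(\mu\) with \(\|\mu\| = \left\| f \right\|^*\) and \(\check\mu = f\) on \([N]^d\).

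Given this description, take representing measures \(\mu\) for \(f\) and \(\nu\) for \(g\) that are near-optimal (or exactly optimal, by weak-\(*\) compactness). The convolution \(\mu*\nu\) then represents \(fg\) on \([N]^d\). Its total mass satisfies \(\|\mu*\nu\|\le\|\mu\|\|\nu\|\), so the inequality "\(\le\)" of the dual description gives \(\left\| fg \right\|^*\le \left\| f \right\|^*\left\| g \right\|^*\). The obstacle I expect is justifying the duality rigorously. The duality step is standard finite-dimensional Hahn--Banach plus Riesz representation, and alternatively one can take a minimizing sequence of discrete measures and use compactness.
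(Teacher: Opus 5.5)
Your proposal is correct. Parts (1) and (3) match the paper's proof; you additionally handle the degenerate case $\widehat g = 0$, which the paper's division by $\|\widehat g\|_\infty$ passes over.

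For (2) you take a genuinely different route. The paper stays entirely in the finite setting. It applies (1) to the modulated test function $n \mapsto h(n)e(n\cdot\alpha)$, whose Fourier transform is a translate of $\widehat h$, to get $\|\widehat{fh}\|_\infty \le \|f\|^*\|\widehat h\|_\infty$. It then applies (1) once more, with $g$ as the fixed function and $fh$ as the test function, giving $\|fg\|^* = \sup_{\|\widehat h\|_\infty\le 1}\bigl|\sum_n g(n)(fh)(n)\bigr| \le \|g\|^*\|f\|^*$. So submultiplicativity follows from (1) alone in two lines.

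You instead prove a duality statement: $\|f\|^*$ is the minimal total variation of a complex measure $\mu$ on $\mathbb{T}^d$ with $f(n)=\int_{\mathbb{T}^d} e(n\cdot\theta)\,\mathrm{d}\mu(\theta)$ for all $n\in[N]^d$. You get this from Hahn--Banach on the finite-dimensional space of trigonometric polynomials with frequencies in $[N]^d$, plus Riesz representation. Submultiplicativity then reduces to $\|\mu*\nu\|\le\|\mu\|\|\nu\|$.

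Your route uses much heavier machinery for an elementary fact. In exchange it gives a structural picture: $\|\cdot\|^*$ is the restriction to $[N]^d$ of the Fourier--Stieltjes (measure-algebra) norm. That explains why it is an algebra norm, and any representing measure gives an explicit upper bound for $\|\cdot\|^*$.

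Two small points on presentation:
\begin{itemize}
\item You do not need to equate the discrete Wiener-type infimum with the minimal measure mass. Only the measure version is used. Its ``$\le$'' direction is the same computation with integrals in place of sums, and Hahn--Banach already gives an exactly optimal measure, so no compactness argument is required.
\item The inequality $\|\widehat{uv}\|_\infty\le\|\widehat u\|_1\|\widehat v\|_\infty$ that you announce at the start is not what your argument actually uses; the working ingredient is the convolution bound for measures.
\end{itemize}
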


\begin{proof}
  Let \(f, g : [N]^d \to \mathbb{C}\) be functions. 
  Observe that
  \begin{align*} 
    \left\| f \right\|^* 
    =
    \sup_{\left\| \widehat{h} \right\|_\infty \leq 1}
    \left| \sum_{n \in [N]^d} f(n) h(n)\right|
    \geq 
    \left| \sum_{n \in [N]^d} \frac{f(n) g(n)}{\left\| \widehat{g} \right\|_\infty}\right|.
  \end{align*} 
  This proves the first part.
  Note that for any function \(h : [N]^d \to \mathbb{C}\) and 
  any \(\alpha \in \mathbb{T}^d\) we have
  \begin{align*}
    \left| \widehat{f h} (\alpha) \right|
    &= 
    \left| \sum_{n \in [N]^d} f(n) h(n) e(n \cdot \alpha) \right| \\
    &\leq 
    \left\| f \right\|^* 
    \sup_{\beta \in \mathbb{T}^d}
    \left| \sum_{n \in [N]^d} h(n) e(n \cdot \alpha) e(n \cdot \beta) \right| \\
    &=
    \left\| f \right\|^* \| \widehat{h} \|_\infty.
  \end{align*}
  Therefore, 
  \begin{align*}
    \left\| f g \right\|^* 
    =
    \sup_{\| \widehat{h} \|_\infty \leq 1}
    \left| \sum_{n \in [N]^d} f(n) g(n) h(n) \right|
    \leq
    \sup_{\| \widehat{h} \|_\infty \leq 1}
    \| \widehat{f h} \|_\infty
    \left\| g \right\|^*
    \leq 
    \left\| f \right\|^* \left\| g \right\|^*,
  \end{align*}
  which proves the second part.
  Let \(m\) be any element of \([N]^d\). 
  Note that \(\| \widehat{\mathbf{1}}_{\{m\}} \|_\infty = 1\) and hence
  \begin{align*}
    \left\| f \right\|^* 
    =
    \sup_{\| \widehat{h} \|_\infty \leq 1}
    \left| \sum_{n \in [N]^d} f(n) h(n) \right|
    \geq
    \left| f(m) \right|.
  \end{align*}
  This implies the third part of \cref{properties of *norm}.
\end{proof}

We state the following version of hyperplane separation theorem,
see for instance \cite[Proposition 1.5.3]{Bertsekas_convex-optimization}.

\begin{theorem}[Strict hyperplane separation theorem] \label{Supporting hyperplane theorem}
  Let \(K \subseteq \mathbb{R}^n\) be a nonempty, closed and convex set 
  and \(v \in \mathbb{R}^n \setminus K\). Then there exists \(y \in \mathbb{R}^n\)
  such that \(\langle v, y \rangle > \langle u, y \rangle\) holds for all \(u \in K\),
  where \(\langle \cdot, \cdot \rangle\) denotes 
  the standard inner product on \(\mathbb{R}^n\).
\end{theorem}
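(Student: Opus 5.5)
We prove the strict hyperplane separation theorem in the standard way, by projecting onto \(K\). Since \(K\) is nonempty, pick any \(u_0 \in K\) and set \(R \coloneq \|v - u_0\|\). The set \(K' \coloneq K \cap \{u : \|u - v\| \leq R\}\) is nonempty, closed and bounded, hence compact. The continuous function \(u \mapsto \|u - v\|\) therefore attains its minimum on \(K'\) at some point \(p\). Points of \(K \setminus K'\) are at distance greater than \(R \geq \|p - v\|\) from \(v\), so \(p\) minimizes the distance to \(v\) over all of \(K\). Since \(K\) is closed and \(v \notin K\), we have \(p \neq v\), so \(\|v - p\| > 0\).

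Next we derive the variational inequality characterizing \(p\). For any \(u \in K\) and \(t \in [0,1]\), convexity gives \(p + t(u-p) \in K\). Hence
\[
  \|v - p - t(u-p)\|^2 \geq \|v-p\|^2 .
\]
Expanding the left side yields
\[
  -2t\langle v-p, u-p\rangle + t^2\|u-p\|^2 \geq 0 .
\]
Dividing by \(t > 0\) and letting \(t \to 0^+\) gives \(\langle v - p, u - p\rangle \leq 0\) for all \(u \in K\).

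Now take \(y \coloneq v - p\). For every \(u \in K\),
\[
  \langle u, y\rangle \leq \langle p, y\rangle = \langle v, y\rangle - \|v-p\|^2 < \langle v, y\rangle .
\]
This is exactly the required strict inequality.

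The only delicate point is the existence of the nearest point \(p\) when \(K\) is unbounded. This is handled by the truncation to the compact set \(K'\) above. Uniqueness of \(p\) is not needed. In the paper's application, \(\mathbb{R}^n\) will be the real vector space of real-valued functions on \([N]^d\) (or of complex functions viewed as \(\mathbb{R}^{2N^d}\)), and \(y\) will serve as the separating test function.
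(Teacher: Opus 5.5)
Your projection argument is correct and complete. Truncating to the compact set \(K\cap\{u:\|u-v\|\le R\}\) gives a nearest point \(p\), convexity yields \(\langle v-p,u-p\rangle\le 0\), and \(y=v-p\) separates strictly because \(\|v-p\|^2>0\); note that \(p\neq v\) holds simply because \(p\in K\) and \(v\notin K\), and closedness is what you actually use, namely for compactness of the truncated set. The paper gives no proof of its own and only cites Bertsekas, Proposition 1.5.3, whose proof is this same nearest-point argument, so your route matches the intended one.
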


Using \cref{properties of *norm,Supporting hyperplane theorem}, 
we establish the following dense model lemma.

\begin{lemma}[Dense model lemma] \label{Dense model lemma}
  Let \(\varepsilon \leq 1\) be a positive real number, \(r\) be a positive integer and 
  \(\nu : [N]^d \to [0, \infty)\) be a function.
  Suppose that \(f_1, \dots, f_r : [N]^d \to [0, \infty)\) are functions with 
  \(f_1 + \dots + f_r \leq \nu\) such that 
  for any \(g_1, \dots, g_r : [N]^d \to [0, \infty)\) satisfying 
  \(g_1 + \dots + g_r \leq \mathbf{1}_{[N]^d}\), there exists \(1 \leq i \leq r\)
  such that 
  \[
    \left\| \widehat{f}_i - \widehat{g}_i \right\|_\infty 
    >
    \varepsilon N^d.
  \]
  Then 
  \[
    \left\| \widehat{\nu} - \widehat{\mathbf{1}}_{[N]^d} \right\|_\infty 
    \gg_{\varepsilon, r}
    N^d
  \]
  holds.
\end{lemma}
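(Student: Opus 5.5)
We follow the Hahn--Banach (Gowers / Reingold--Trevisan--Tulsiani--Vadhan) approach to dense model theorems, in the form used by Ortega and Prendiville. We identify functions \([N]^d \to \mathbb{R}\) with vectors in \(\mathbb{R}^{N^d}\) and work in \(\mathbb{R}^{rN^d}\), viewing \((f_1,\dots,f_r)\) as a single vector \(F\). Since \(\|\widehat{\phi}\|_\infty \leq \sup_g |\sum \phi g| \) over the dual ball, the Fourier uniform norm of a function is dual to \(\|\cdot\|^*\). We define
\[
  K \coloneq \Bigl\{ (g_1 + h_1, \dots, g_r + h_r) : g_i \geq 0,\ \textstyle\sum_i g_i \leq \mathbf{1}_{[N]^d},\ \|\widehat{h}_i\|_\infty \leq \varepsilon N^d \Bigr\},
\]
with all functions real-valued. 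This set is nonempty, convex and compact, since it is the Minkowski sum of two compact convex sets. The hypothesis says exactly that \(F \notin K\): real-valued \(h_i\) suffice because the \(f_i\) and \(g_i\) are real.

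\cref{Supporting hyperplane theorem} then gives real functions \(\phi_1,\dots,\phi_r\) with
\[
  \sum_i \langle f_i, \phi_i\rangle > \sup_{g}\sum_i \langle g_i,\phi_i\rangle + \varepsilon N^d \sum_i \|\phi_i\|^{*}_{\mathbb{R}}.
\]
The \(h\)-part of \(K\) contributes the dual norm term. Normalize so that \(\sum_i \|\phi_i\|^* = 1\), noting that \(\|\cdot\|^*\) over real test functions is comparable to \(\|\cdot\|^*\) up to a factor of \(2\). The supremum over \(g\) equals \(\sum_n \max(0,\max_i \phi_i(n))\), so
\[
  \sum_i \langle f_i,\phi_i\rangle > \sum_n \Phi(n) + c\varepsilon N^d, \qquad \Phi \coloneq \max(0,\phi_1,\dots,\phi_r).
\]
Since \(f_i \geq 0\) and \(\sum f_i \leq \nu\), we get \(\sum_i \langle f_i, \phi_i\rangle \leq \langle \nu, \Phi\rangle\). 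Hence
\[
  \langle \nu - \mathbf{1}_{[N]^d}, \Phi\rangle > c\varepsilon N^d .
\]

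The main obstacle is that \(\Phi\) is a nonlinear function of the \(\phi_i\), so its \(\|\cdot\|^*\) norm is not controlled directly. Here we use \cref{properties of *norm}. By part (3), \(\|\phi_i\|_\infty \leq 1\), so each \(\phi_i\) takes values in \([-1,1]\). The function \(x \mapsto \max(0,x_1,\dots,x_r)\) is continuous on \([-1,1]^r\), so by Stone--Weierstrass there is a polynomial \(P\), depending only on \(\varepsilon\) and \(r\), with
\[
  |P(x) - \max(0,x_1,\dots,x_r)| \leq c\varepsilon/4 \quad \text{on } [-1,1]^r .
\]
By part (2) (submultiplicativity) and the triangle inequality, \(\|P(\phi_1,\dots,\phi_r)\|^* \leq \tilde{P}(1,\dots,1) \eqqcolon M_{\varepsilon,r}\), where \(\tilde P\) is \(P\) with its coefficients replaced by their absolute values. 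The constant term needs \(\|\mathbf{1}_{[N]^d}\|^*\); we handle it by absorbing constants via \(\mathbf{1}_{[N]^d}\) (since \(\|\widehat g\|_\infty\le 1\) gives \(|\sum g|\le1\), we have \(\|\mathbf{1}\|^*\le 1\)).

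Then we split \(\langle \nu - \mathbf{1}, \Phi\rangle\) into \(\langle \nu - \mathbf{1}, P(\phi)\rangle\) and an error term. The error has size at most \((c\varepsilon/4)(\|\nu\|_1 + N^d)\). To bound this, we note \(\|\nu\|_1 = \widehat{\nu}(0) \leq N^d + \|\widehat{\nu} - \widehat{\mathbf{1}}\|_\infty\). We may therefore assume \(\|\widehat\nu-\widehat{\mathbf 1}\|_\infty\le N^d\), since otherwise we are done, and the error is then at most \(c\varepsilon N^d/2\). Using part (1) on the main term gives
\[
  c\varepsilon N^d/2 < M_{\varepsilon,r}\,\|\widehat{\nu} - \widehat{\mathbf{1}}_{[N]^d}\|_\infty,
\]
which is the claim. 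We expect the compactness and closedness check for \(K\) and the real versus complex bookkeeping in the duality to be routine.
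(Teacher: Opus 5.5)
Your proposal is correct and follows essentially the same route as the paper. Both arguments strictly separate $(f_1,\dots,f_r)$ from the convex set of tuples $(g_i+h_i)$, pass to $\Phi=\max\{0,\phi_1,\dots,\phi_r\}$, approximate it by a Stone--Weierstrass polynomial, and use submultiplicativity of $\|\cdot\|^*$. Your normalization $\sum_i\|\phi_i\|^*=1$ and your direct bound on $\|P(\phi_1,\dots,\phi_r)\|^*$ replace the paper's rescaling of $(f_i)$ by $2/(2+\varepsilon)$ and its pigeonholing onto a single monomial, and you also handle the real-versus-complex factor $2$ that the paper glosses over.

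There is one small arithmetic slip. When $\|\nu\|_1\le 2N^d$, the approximation error is at most $(c\varepsilon/4)\cdot 3N^d=3c\varepsilon N^d/4$, not $c\varepsilon N^d/2$. This still leaves a main term exceeding $c\varepsilon N^d/4$, so the conclusion is unaffected.
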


\begin{proof}
  Write the set \([N]^d\) as an ordered set 
  \(\left\{ n_1, \dots, n_{N^d} \right\}\).
  For any function \(\psi : [N]^d \to \mathbb{R}\) denote the \(N^d\)-tuple 
  \(\left( \psi(n_1), \dots, \psi(n_{N^d}) \right)\) in \(\mathbb{R}^{N^d}\) by \(t(\psi)\).
  Let \(\mathcal{C}\) denote the collection of all those elements of \(\mathbb{R}^{r N^d}\)
  which can be expressed as
  \[
    \left( t(g_1 + h_1), \dots, t(g_r + h_r) \right)
  \]
  for some functions \(g_1, \dots, g_r : [N]^d \to [0, \infty)\) and
  \(h_1, \dots, h_r : [N]^d \to \mathbb{R}\) satisfying
  \(g_1 + \dots + g_r \leq \mathbf{1}_{[N]^d}\) and 
  \(\| \widehat{h}_i \|_\infty \leq {\varepsilon N^d}/{4}\)
  for all \(1 \leq i \leq r\).
  Note that \(\mathcal{C}\) is closed and convex subset of \(\mathbb{R}^{r N^d}\).
  The element 
  \[
    \left( \frac{2}{2 + \varepsilon} \right) (t(f_1), \dots, t(f_r)) \in \mathbb{R}^{r N^d}
  \]
  does not lie in \(\mathcal{C}\), 
  because if 
  \(
    \frac{2}{2 + \varepsilon} f_i = g_i + h_i 
  \)
  for all \(1 \leq i \leq r\)
  and \(g_1 + \dots + g_r \leq \mathbf{1}_{[N]^d}\),
  then our assumption in the statement implies that 
  there exists \(1 \leq i \leq r\) such that
  \(\| \widehat{h}_i \|_\infty > {\varepsilon N^d}/{4}\).
  Therefore, using \cref{Supporting hyperplane theorem} we obtain that there exist
  functions \(\phi_1, \dots, \phi_r : [N]^d \to \mathbb{R}\) such that 
  for any \(g_1, \dots, g_r : [N]^d \to [0, \infty)\) 
  with \(g_1 + \dots + g_r \leq \mathbf{1}_{[N]^d}\) and 
  any \(h_1, \dots, h_r : [N]^d \to \mathbb{R}\) with 
  \(\| \widehat{h}_i \|_\infty \leq \varepsilon N^d / 4\) 
  for all \(1 \leq i \leq r\), we have
  \begin{align} \label{existence of phi}
    \frac{2}{2 + \varepsilon}
    \sum_{i=1}^{r} \sum_{n \in [N]^d}
    f_i(n) \phi_i(n) 
    > 
    \sum_{i=1}^{r} \sum_{n \in [N]^d}
    \left( g_i(n) + h_i(n) \right) \phi_i(n).
  \end{align}
  In particular, taking \(g_i = 0 = h_i\) for all \(1 \leq i \leq r\) implies that
  the quantity in the left-hand side is positive.
  Therefore, by scaling the functions
  \(\phi_1, \dots, \phi_r\) we may assume that 
  \begin{equation*} 
    \sum_{i=1}^{r} \sum_{n \in [N]^d}
    f_i(n) \phi_i(n) 
    = 
    \left( 1 + \frac{\varepsilon}{2} \right) N^d,
  \end{equation*}
  and hence, using \eqref{existence of phi} it follows that
  \begin{equation} \label{normalized phi}
    \sum_{i=1}^{r} \sum_{n \in [N]^d}
    g_i(n) \phi_i(n)
    +
    \sum_{i=1}^{r} \sum_{n \in [N]^d}
    h_i(n) \phi_i(n)
    < 
    N^d.
  \end{equation}
  Consider the function \(\phi : [N]^d \to [0, \infty)\) defined as 
  \[
    \phi (n) 
    =
    \max \left\{ 0, \phi_1(n), \dots, \phi_r(n) \right\},
  \]
  and note that 
  \begin{equation} \label{bound for sum nu}
    \left( 1 + \frac{\varepsilon}{2} \right) N^d
    =
    \sum_{i=1}^{r} \sum_{n \in [N]^d}
    f_i(n) \phi_i(n) 
    \leq 
    \sum_{n \in [N]^d}
    \nu(n) \phi(n).
  \end{equation}
  For each \(n \in [N]^d\) fix \(1 \leq i_n \leq r\) such that 
  \(\phi_{i_n}(n) = \max \left\{ \phi_1(n), \dots, \phi_r(n) \right\}\).
  For every \(1 \leq i \leq r\), taking 
  \[
    g_i(n) =
    \begin{cases}
      1 &\text{if } i = i_n \text{ and } \phi_{i_n}(n) \geq 0 \\
      0 &\text{otherwise},
    \end{cases}
  \]
  and \(h_i = 0\),
  and using \eqref{normalized phi} we deduce that
  \begin{equation} \label{sum of phi}
    \sum_{n \in [N]^d} \phi(n)
    =
    \sum_{i=1}^{r} \sum_{n \in [N]^d}
    g_i(n) \phi_i(n)
    <
    N^d.
  \end{equation}
  Combining \eqref{bound for sum nu} and \eqref{sum of phi} we obtain that
  \begin{equation} \label{diff of nu and 1}
    \sum_{n \in [N]^d}
    \left( \nu(n) - 1 \right) \phi(n)
    >
    \frac{\varepsilon N^d}{2}.
  \end{equation}

  Given any \(1 \leq i \leq r\), taking \(h_j = 0\) for all \(j \neq i\) and
  \(g_j = 0\) for all \(1 \leq j \leq r\) in \eqref{normalized phi},
  we obtain that 
  \begin{equation*}
    \sup_{\| \widehat{h} \|_\infty \leq \frac{\varepsilon N^d}{4}}
    \left|  
      \sum_{n \in [N]^d}
      h(n) \phi_i(n)
    \right|
    \leq
    N^d.
  \end{equation*}
  Therefore, \cref{properties of *norm} implies that
  \begin{equation} \label{sup of phi}
    \left\| \phi_i \right\|_\infty 
    \leq \left\| \phi_i \right\|^*
    \leq \frac{4}{\varepsilon}
  \end{equation}
  holds for all \(1 \leq i \leq r\).
  By the Stone--Weierstrass theorem, there exists a polynomial \(P\)
  in \(r\) variables,
  depending only on \(\varepsilon, r\) such that for any 
  \((y, \dots, y_r) \in \mathbb{R}^r\) with \(|y_i| \leq 4/\varepsilon\)
  for all \( 1 \leq i \leq r\) we have 
  \[
    \left|  
      \max \left\{0, y_1, \dots, y_r \right\} - P(y_1, \dots, y_r)
    \right|
    \leq 
    \frac{\varepsilon}{12}.
  \]
  Note that if \(\sum_{n \in [N]^d} \nu(n) \geq 2N^d\) then 
  \(\| \widehat{\nu} - \widehat{\mathbf{1}}_{[N]^d} \|_\infty \geq N^d\)
  and we are done.
  Assume that \(\sum_{n \in [N]^d} \nu(n) \leq 2N^d\) holds.
  Thus, we obtain that
  \begin{align*}
    &\sum_{n \in [N]^d}
    \left( \nu(n) - 1 \right) \phi(n) \\
    &\leq 
    \sum_{n \in [N]^d}
    \nu(n) \left(P \left( \phi_1(n), \dots, \phi_r(n) \right) + \frac{\varepsilon}{12}\right)
    -
    \sum_{n \in [N]^d}
    \left(P \left( \phi_1(n), \dots, \phi_r(n) \right) - \frac{\varepsilon}{12} \right) \\
    &\leq 
    \sum_{n \in [N]^d}
    \left( \nu(n) - 1 \right)
    P \left( \phi_1(n), \dots, \phi_r(n) \right)
    + 
    \frac{\varepsilon N^d}{4}.
  \end{align*}
  Using \eqref{diff of nu and 1} and the pigeonhole principle,
  we obtain that there exist a nonnegative integer \(R\) and a positive 
  constant \(C''\), both depending on \(\varepsilon, r\) such that 
  \[
    \sum_{n \in [N]^d}
    \left( \nu(n) - 1 \right)
    \psi_1(n) \cdots \psi_R(n)
    \geq 
    \frac{\varepsilon N^d}{C''}
  \]
  where \(\psi_i \in \left\{\phi_1, \dots, \phi_r \right\}\)
  for all \(0 \leq i \leq R\).
  Using \cref{properties of *norm} and \eqref{sup of phi}
  we obtain that 
  \begin{align*}
    \sum_{n \in [N]^d}
    \left( \nu(n) - 1 \right)
    \psi_1(n) \cdots \psi_R(n)
    \leq 
    \left\| \widehat{\nu} - \widehat{\mathbf{1}}_{[N]^d} \right\|_\infty 
    \left\| \psi_1 \cdots \psi_R \right\|^*
    \ll_{\varepsilon,r}
    \left\| \widehat{\nu} - \widehat{\mathbf{1}}_{[N]^d} \right\|_\infty.
  \end{align*}
  This concludes the proof.
\end{proof}

\section{Partition regularity over Sidon sets}
\label{Sec: Partition regularity}

Using \cref{Robertson higher dimensional Rado} for \(t = 1\),
we prove \cref{Counting monochromatic solutions} which gives 
a lower bound for the weighted count of the solutions to
a partition regular linear equation for a certain weight function.

\begin{lemma}[Counting weighted monochromatic solutions]
\label{Counting monochromatic solutions}
  Let \(s\) be a positive integer and \(a_1, \dots, a_s\) be nonzero integers with 
  \(\sum_{i \in I} a_i = 0\) for some nonempty subset \(I \subseteq [s]\).
  Then for every positive integer \(r\) there exist positive constants 
  \(c, N_0\),
  both depending on \(d, r, s, a_1, \dots, a_s\),
  such that for every \(N \geq N_0\) the following holds.
  For any \(g_1, \dots, g_r : [N]^d \to [0, \infty)\) with 
  \(g_1 + \dots + g_r = \mathbf{1}_{[N]^d}\) there exists \(1 \leq i \leq r\)
  such that 
  \[
    \sum_{a_1 x_1 + \dots + a_s x_s = 0}
    g_i(x_1) \cdots g_i(x_s) 
    \geq 
    c N^{d(s-1)}.
  \]
\end{lemma}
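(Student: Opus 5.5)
Since \(a_1 x_1 + \dots + a_s x_s = 0\) is a single equation whose nonzero coefficients satisfy \(\sum_{i \in I} a_i = 0\), the \(1 \times s\) matrix \(A = (a_1, \dots, a_s)\) has rank \(t = 1\). It satisfies the columns condition: take \(I_1 = I\), and if \(I \neq [s]\) take \(I_2 = [s] \setminus I\). The sum over \(I_2\) lies in \(\mathrm{span}_{\mathbb{Q}}(I_1)\) because that span is all of \(\mathbb{Q}\) (the columns are nonzero). So I will apply \cref{Robertson higher dimensional Rado} with \(t = 1\), but to an auxiliary coloring rather than directly to the \(g_i\). The plan is to convert the fractional partition \(g_1 + \dots + g_r = \mathbf{1}_{[N]^d}\) into a genuine coloring by pigeonhole on the values of the \(g_i\).

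Fix \(N\) and the \(g_i\). For each \(n \in [N]^d\) we have \(\sum_i g_i(n) = 1\), so some \(i\) satisfies \(g_i(n) \geq 1/r\). Let \(\chi(n)\) be the least such \(i\); this is an \(r\)-coloring of \([N]^d\). By \cref{Robertson higher dimensional Rado}, for \(N \geq N_0(d, r, A)\) there are at least \(c_0 N^{d(s-1)}\) monochromatic solutions \((x_1, \dots, x_s) \in ([N]^d)^s\), with \(c_0 = c_0(r, d, A)\). Pigeonholing over the \(r\) colors, some color \(i\) has at least \(c_0 N^{d(s-1)}/r\) monochromatic solutions in the class \(\chi^{-1}(i)\).

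For each such solution, every coordinate satisfies \(g_i(x_j) \geq 1/r\), so \(g_i(x_1) \cdots g_i(x_s) \geq r^{-s}\). Since all the \(g_i\) are nonnegative, the remaining terms of the weighted sum contribute nonnegatively. Hence
\[
  \sum_{a_1 x_1 + \dots + a_s x_s = 0} g_i(x_1) \cdots g_i(x_s) \geq \frac{c_0}{r^{s+1}} N^{d(s-1)},
\]
and we may take \(c = c_0 r^{-s-1}\).

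There is one point to check, which I expect to be the only one needing care: that the solutions counted in \cref{Robertson higher dimensional Rado} lie in \(([N]^d)^s\), matching the support of the \(g_i\). This holds because the coloring is only of \([N]^d\). The constants depend only on \(d, r, s, a_1, \dots, a_s\), as required.
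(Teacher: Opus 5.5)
Your proposal is correct and follows the paper's proof essentially step for step. You convert the fractional partition into an \(r\)-coloring by choosing for each \(n\) an index with \(g_i(n) \geq 1/r\), apply \cref{Robertson higher dimensional Rado} with \(t = 1\), pigeonhole to a single color class, and bound each product below by \(r^{-s}\). Your explicit check of the columns condition (\(I_1 = I\), \(I_2 = [s] \setminus I\)) is a detail the paper leaves implicit.
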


\begin{proof}
  Let \(r\) be a positive integer and \(g_1, \dots, g_r : [N]^d \to [0, \infty)\)
  be functions satisfying \(g_1 + \dots + g_r = \mathbf{1}_{[N]^d}\).
  Using the pigeonhole principle, for every \(n \in [N]^d\) 
  fix an integer \(1 \leq i_n \leq r\) such that 
  \(g_{i_n}(n) \geq 1/r\). 
  For every \(1 \leq i \leq r\) define the set 
  \[
    C_i
    \coloneq
    \left\{  
      n \in [N]^d :
      i_n = i
    \right\}
  \]
  and note that 
  \[
    [N]^d = \bigsqcup_{i=1}^{r} C_i.
  \]
  Using the pigeonhole principle,
  \cref{Robertson higher dimensional Rado} implies that there exist
  an integer \(1 \leq i \leq r\), and positive constants 
  \(c_1, N_0\), 
  both depending on \(d, r, s, a_1, \dots, a_s\),
  such that the number of solutions to the equation 
  \(a_1 x_1 + \dots + a_s x_s = 0\) in \(C_i\) is at least \(c_1 N^{d(s-1)}\).
  Therefore, we obtain that 
  \begin{align*}
    \sum_{a_1 x_1 + \dots + a_s x_s = 0}
    g_i(x_1) \cdots g_i(x_s) 
    &\geq
    \sum_{\substack{a_1 x_1 + \dots + a_s x_s = 0 \\ x_1, \dots, x_s \in C_i}}
    g_i(x_1) \cdots g_i(x_s) \\
    &\geq
    \frac{1}{r^s}
    \sum_{\substack{a_1 x_1 + \dots + a_s x_s = 0}}
    \mathbf{1}_{C_i}(x_1) \cdots \mathbf{1}_{C_i}(x_s) \\
    &\geq 
    \frac{c_1 N^{d(s-1)}}{r^s}.
  \end{align*}
  This concludes the result.
\end{proof}

Now we prove \cref{Partition regularity}, which
shows that the partition regular linear equations admit 
many monochromatic solutions, for any given partition of a sufficiently
dense Sidon subset of \([N]^d\), for all large \(N\).
We prove it using 
\cref{Fourier uniformity,Dense model lemma,Counting monochromatic solutions}.

\begin{proof} [Proof of \cref{Partition regularity}]
  Let \(r\) be a positive integer.
  Consider positive constants \(c < 1\) and \(N_0\) 
  to be specified later.
  Let \(N \geq N_0\) be an integer and \(S \subseteq [N]^d\) be a Sidon set
  with \(N^{d/2}/2 \leq |S| \leq N^{d/2}\).
  Assume that there exists an \(r\)-coloring of \(S\), 
  say \(S = C_1 \sqcup \dots \sqcup C_r\) such that
  \[
    \sum_{a_1 x_1 + \dots + a_s x_s = 0}
    \mathbf{1}_{C_i}(x_1) \cdots \mathbf{1}_{C_i}(x_s) 
    \leq 
    c |S|^s N^{-d}
  \]
  holds for all \(1 \leq i \leq r\).
  For every \(1 \leq i \leq r\) define 
  \[
    f_i \coloneq
    \frac{N^d}{|S|} \mathbf{1}_{C_i}
  \]
  and note that 
  \(f_1 + \dots + f_r = \frac{N^d}{|S|} \mathbf{1}_{S}\)
  holds.
  Let \(g_1, \dots, g_r : [N]^d \to [0, \infty)\) be functions satisfying 
  \(g_1 + \dots + g_r = \mathbf{1}_{[N]^d}\).
  \cref{Counting monochromatic solutions} implies that there exist
  positive constants \(c_1, N_1\),
  both depending on \(d, r, s, a_1, \dots, a_s\),
  such that 
  taking \(N_0 \geq N_1\) and \(c = c_1/2\) yields that 
  there exists \(1 \leq i \leq r\) such that 
  \begin{align*}
    &\sum_{a_1 x_1 + \dots + a_s x_s = 0}
    \left(  
      g_i(x_1) \cdots g_i(x_s) - f_i(x_1) \cdots f_i(x_s) 
    \right) \\
    &\geq 
    c_1 N^{d(s-1)} -
    \left( \frac{N^d}{|S|} \right)^s
    \sum_{a_1 x_1 + \dots + a_s x_s = 0}
    \mathbf{1}_{C_i}(x_1) \cdots \mathbf{1}_{C_i}(x_s) \\
    &\geq
    c N^{d(s-1)}.
  \end{align*}
  Using a telescoping identity, the orthogonality, and 
  the H\"older's inequality,
  we obtain that 
  \begin{multline} \label{estimate of f-g}
    \left|  
      \sum_{a_1 x_1 + \dots + a_s x_s = 0}
      \left(  
        g_i(x_1) \cdots g_i(x_s) - f_i(x_1) \cdots f_i(x_s) 
      \right)
    \right| \\
    \leq
    s \| \widehat{f}_i - \widehat{g}_i \|_\infty
    \max 
    \left\{ 
      \| \widehat{f}_i \|_{s-1}^{s-1}, \| \widehat{g}_i \|_{s-1}^{s-1} 
    \right\}.
  \end{multline}
  Using the assumption \(|S| \geq N^{d/2}/2\) observe that
  \begin{align*}
    \| \widehat{f}_i \|_{s-1}^{s-1} 
    &=
    \int_{\mathbb{T}^d}
    \left| \sum_{n \in [N]^d} f_i(n) e(n \cdot \theta) \right|^{s-1} 
    \mathrm{d}\theta \\
    &\leq
    \frac{N^{d(s-1)}}{|S|^4} 
    \sum_{n_1 - n_2 = m_1 - m_2} 
    \mathbf{1}_S(n_1) \mathbf{1}_S(n_2) \mathbf{1}_S(m_1) \mathbf{1}_S(m_2) \\
    &\leq 
    8 N^{d(s-2)}.
  \end{align*}
  The Parseval's identity implies that
  \begin{align*}
    \| \widehat{g}_i \|_{s-1}^{s-1} 
    =
    \int_{\mathbb{T}^d}
    \left| \sum_{n \in [N]^d} g_i(n) e(n \cdot \theta) \right|^{s-1} 
    \mathrm{d}\theta
    \leq
    N^{d(s-3)}
    \| g_i \|_2^2
    \leq 
    N^{d(s-2)}.
  \end{align*}
  Using \eqref{estimate of f-g} it follows that 
  \begin{equation} \label{large Fourier uniform norm of f-g}
    \| \widehat{f}_i - \widehat{g}_i \|_\infty
    \geq
    \frac{c}{8s}
    N^d.
  \end{equation}

  Note that if 
  \[
    \| \widehat{f}_j - \widehat{g}_j \|_\infty
    \leq
    \frac{c}{16rs}
    N^d
  \]
  for all \(j \neq i\), then the triangle inequality implies that
  \begin{align*}
    \left\| 
      \frac{N^d}{|S|} \widehat{\mathbf{1}}_{S} - \widehat{\mathbf{1}}_{[N]^d} 
    \right\|_\infty
    \geq 
    \| \widehat{f}_i - \widehat{g}_i \|_\infty - 
    \sum_{j \neq i} \| \widehat{f}_j - \widehat{g}_j \|_\infty
    \geq 
    \frac{c}{16s} N^d.
  \end{align*}
  On the other hand, if there exists \(j \in \left\{ 1, \dots, r \right\} \setminus \{i\}\)
  such that
  \[
    \| \widehat{f}_j - \widehat{g}_j \|_\infty
    >
    \frac{c}{16rs}
    N^d,
  \]
  then \cref{Dense model lemma} implies that 
  \[
    \left\| 
      \frac{N^d}{|S|} \widehat{\mathbf{1}}_{S} - \widehat{\mathbf{1}}_{[N]^d} 
    \right\|_\infty
    \gg_{c, r, s}
    N^d.
  \]
  Now using \cref{Fourier uniformity} we deduce that
  \begin{align*}
    N^{d/2}
    \left(  
      \left| \frac{|S|}{N^{d/2}} - 1 \right| + 
      \frac{1}{N^{\frac{d}{2(1+2d)}}}
    \right)^{1/2}
    \gg_{d}
    \left\| 
      \widehat{\mathbf{1}}_S -
      \frac{|S|}{N^d} \widehat{\mathbf{1}}_{[N]^d}
    \right\|_\infty
    \gg_{c, r, s}
    |S|.
  \end{align*}
  Thus, there exists a positive constant 
  \(c_2 < 1\) depending on \(d, r, s, a_1, \dots, a_s\) such that
  \begin{align*}
    N^{d/2} - |S| 
    \geq
    c_2 N^{d/2} - N^{{d^2}/{(1+2d)}}.
  \end{align*}
  Hence, taking 
  \(N_0 = \max \left\{ N_1, \left\lceil 2/c_2 \right\rceil^6 \right\}\) 
  yields that
  \[
    |S| \leq 
    \left( 1 - \frac{c_2}{2} \right) N^{d/2}.
  \]
  This proves \cref{Partition regularity} for \(|S| \leq N^{d/2}\).
  This implies that \cref{Partition regularity} holds for  
  \(|S| \geq N^{d/2}\).
  Hence, the result is proved.
\end{proof}

\section{Equidistribution of dense Sidon sets}
\label{Sec: Equidistribution of dense Sidon sets}

In \cite[Lemma~3.2]{OrtegaPrendiville_extremal-Sidon-sets},
Ortega and Prendiville proved that every nontrivial finite arithmetic 
progression \(P\) in \(\mathbb{Z}\) satisfies
\begin{equation} \label{OP lemma 3.2}
  \int_{\mathbb{T}} 
    \left|  
      \widehat{\mathbf{1}}_P (\alpha) 
    \right|
    \mathrm{d} \alpha
    \ll
    \log |P|.
\end{equation}

The following result follows using \eqref{OP lemma 3.2}.

\begin{lemma} \label{lower bound for Fourier L1 norm}
  Let \(P \coloneq P_1 \times \dots \times P_d \subseteq \mathbb{Z}^d\),
  where \(P_i\) is a nontrivial arithmetic progression in \(\mathbb{Z}\)
  for all \(1 \leq i \leq d\). Then 
  \[
    \int_{\mathbb{T}^d} 
    \left|  
      \widehat{\mathbf{1}}_P (\alpha) 
    \right|
    \mathrm{d} \alpha
    \ll
    \prod_{i=1}^{d}
    \log |P_i|.
  \]
\end{lemma}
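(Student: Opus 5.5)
The plan is to exploit the product structure of \(P\). Since \(\mathbf{1}_P(n) = \prod_{i=1}^d \mathbf{1}_{P_i}(n_i)\) for \(n = (n_1, \dots, n_d)\), and \(e(n \cdot \alpha) = \prod_{i=1}^d e(n_i \alpha_i)\), the Fourier transform factorises:
\[
  \widehat{\mathbf{1}}_P(\alpha)
  =
  \sum_{n \in \mathbb{Z}^d} \prod_{i=1}^{d} \mathbf{1}_{P_i}(n_i) e(n_i \alpha_i)
  =
  \prod_{i=1}^{d} \widehat{\mathbf{1}}_{P_i}(\alpha_i),
\]
where on the right each \(\widehat{\mathbf{1}}_{P_i}\) is the one-dimensional Fourier transform on \(\mathbb{T}\). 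Justifying this is routine: the sum is finite, so it splits as a product of \(d\) one-dimensional sums.

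Next, take absolute values. The Haar probability measure on \(\mathbb{T}^d\) is the product of the Haar probability measures on the factors, so Fubini's theorem (trivially applicable, as the integrand is a bounded continuous function on a compact space) gives
\[
  \int_{\mathbb{T}^d} \left| \widehat{\mathbf{1}}_P(\alpha) \right| \mathrm{d}\alpha
  =
  \prod_{i=1}^{d} \int_{\mathbb{T}} \left| \widehat{\mathbf{1}}_{P_i}(\alpha_i) \right| \mathrm{d}\alpha_i .
\]

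Finally, apply the Ortega--Prendiville bound \eqref{OP lemma 3.2} to each nontrivial progression \(P_i\). Each factor is then \(\ll \log |P_i|\), and multiplying the \(d\) bounds yields the claim, with the implied constant being the \(d\)-th power of the absolute constant in \eqref{OP lemma 3.2}.

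There is no real obstacle in this argument. The only point needing care is that the implied constant depends on \(d\). Also, "nontrivial" presumably means \(|P_i| \geq 2\), so that \(\log |P_i| > 0\) and the bound is meaningful; this is exactly the hypothesis under which \eqref{OP lemma 3.2} is stated.
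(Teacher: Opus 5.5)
Your proposal is correct and matches the paper's approach: the paper gives no separate proof beyond saying the lemma follows from \eqref{OP lemma 3.2}, and your deduction is exactly that — factorise \(\widehat{\mathbf{1}}_P(\alpha)=\prod_i \widehat{\mathbf{1}}_{P_i}(\alpha_i)\), apply Fubini, then use the one-dimensional bound in each coordinate. Your remark that the implied constant is \(C^d\), and so depends on \(d\), is a fair refinement of the paper's unsubscripted \(\ll\). The dependence is unavoidable: if every \(|P_i|=2\), the left side is \((4/\pi)^d\) while the right side is \((\log 2)^d\).
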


Using the above result and \cref{Fourier uniformity}
we prove the following.

\begin{lemma} 
\label{uniformity of Sidon in progression}
  Let \(S \subseteq [N]^d\) be a Sidon set, 
  \(P \coloneq P_1 \times \dots \times P_d \subseteq \mathbb{Z}^d\)
  where \(P_i\) is an arithmetic progression in \([N]\)
  for all \(1 \leq i \leq d\).
  Then 
  \[
    |S \cap P| - \frac{|S| |P|}{N^d}
    \ll_{d, \varepsilon}
    N^{\varepsilon + d/2}
    \left(  
      \left| \frac{|S|}{N^{d/2}} - 1 \right| + 
      {N^{-\frac{d}{2(1+2d)}}}
    \right)^{1/2}
  \]
  holds for every \(\varepsilon > 0\).
\end{lemma}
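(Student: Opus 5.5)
We will write the discrepancy as an inner product and pass it to the Fourier side. Since \(P \subseteq [N]^d\), we have \(|P \cap [N]^d| = |P|\) and
\[
  |S \cap P| - \frac{|S||P|}{N^d}
  = \sum_{n \in \mathbb{Z}^d} \Bigl( \mathbf{1}_S(n) - \frac{|S|}{N^d}\mathbf{1}_{[N]^d}(n) \Bigr) \mathbf{1}_P(n).
\]
Write \(F \coloneq \mathbf{1}_S - \frac{|S|}{N^d}\mathbf{1}_{[N]^d}\). By Parseval/orthogonality on \(\mathbb{T}^d\), the sum equals \(\int_{\mathbb{T}^d} \widehat{F}(\alpha)\,\overline{\widehat{\mathbf{1}}_P(\alpha)}\,\mathrm{d}\alpha\), since \(\mathbf{1}_P\) is real. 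Its absolute value is therefore at most
\[
  \|\widehat{F}\|_\infty \int_{\mathbb{T}^d} |\widehat{\mathbf{1}}_P(\alpha)|\,\mathrm{d}\alpha .
\]

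\cref{Fourier uniformity} bounds \(\|\widehat{F}\|_\infty\) by \(\ll_d N^{d/2}\bigl( \bigl| |S|/N^{d/2} - 1 \bigr| + N^{-\frac{d}{2(1+2d)}} \bigr)^{1/2}\), which is exactly the main factor on the right-hand side. For the \(L^1\) norm we use \cref{lower bound for Fourier L1 norm}. Since \(\mathbf{1}_P(n) = \prod_i \mathbf{1}_{P_i}(n_i)\), the transform factors, and the integral over \(\mathbb{T}^d\) is the product of the one-dimensional integrals. Each factor is \(\ll \log|P_i| \leq \log N\) when \(|P_i| \geq 2\).

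Degenerate progressions must be handled separately, because \eqref{OP lemma 3.2} needs \(P_i\) nontrivial. If \(|P_i| = 1\), then \(\widehat{\mathbf{1}}_{P_i}\) has modulus \(1\), so that factor's integral is \(1\). If some \(P_i\) is empty, both sides vanish. In every case \(\int |\widehat{\mathbf{1}}_P| \ll_d (1+\log N)^d\).

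Finally, \((1+\log N)^d \ll_{d,\varepsilon} N^{\varepsilon}\) for every \(\varepsilon > 0\). Combining the two bounds gives the claim, in fact with \(|\cdot|\) on the left-hand side.

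The only mild obstacle is the bookkeeping: checking that the Fourier normalisation makes the inner-product identity hold (up to conjugation, which is harmless because \(\mathbf{1}_P\) is real-valued), and treating progressions of length \(0\) or \(1\) separately. No genuinely hard step is expected.
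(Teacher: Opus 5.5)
Your proposal is correct and follows the paper's argument. You write the discrepancy by orthogonality as an integral pairing \(\widehat{\mathbf{1}}_P\) with \(\widehat{\mathbf{1}}_S - \frac{|S|}{N^d}\widehat{\mathbf{1}}_{[N]^d}\), bound it by the sup norm from \cref{Fourier uniformity} times \(\int_{\mathbb{T}^d}|\widehat{\mathbf{1}}_P|\) from \cref{lower bound for Fourier L1 norm}, and absorb the logarithms into \(N^{\varepsilon}\). Your separate handling of progressions of length \(0\) or \(1\), where \(\prod_i \log|P_i|\) would vanish but the \(L^1\) integral does not, is a small point of care that the paper's proof omits.
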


\begin{proof}
  Using the orthogonality we observe that 
  \begin{align*}
    &\int_{\mathbb{T}^d}
    \widehat{\mathbf{1}}_{P} (\alpha)
    \left(  
      \widehat{\mathbf{1}}_S  -
      \frac{|S|}{N^d} \widehat{\mathbf{1}}_{[N]^d}
    \right) (-\alpha)
    \mathrm{d}\alpha \\
    &=
    \int_{\mathbb{T}^d}
    \sum_{n \in P} \sum_{m \in S} 
    e \left( (n - m) \cdot \alpha \right) 
    \mathrm{d} \alpha
    -
    \frac{|S|}{N^d}
    \int_{\mathbb{T}^d}
    \sum_{n \in P} \sum_{m \in [N]^d} 
    e \left( (n - m) \cdot \alpha \right) 
    \mathrm{d} \alpha \\
    &=
    |S \cap P| - \frac{|S| |P|}{N^d}.
  \end{align*}
  Therefore, 
  \cref{Fourier uniformity,lower bound for Fourier L1 norm} imply that
  given any \(\varepsilon > 0\) we have
  \begin{align*}
    \left|  
      \int_{\mathbb{T}^d}
      \widehat{\mathbf{1}}_{P} (\alpha)
      \left( 
        \widehat{\mathbf{1}}_S  -
        \frac{|S|}{N^d} \widehat{\mathbf{1}}_{[N]^d}
      \right) (-\alpha)
      \mathrm{d}\alpha
    \right| 
    &\leq
    \left\| 
      \widehat{\mathbf{1}}_S -
      \frac{|S|}{N^d} \widehat{\mathbf{1}}_{[N]^d}
    \right\|_\infty
    \int_{\mathbb{T}^d} 
    \left|  
      \widehat{\mathbf{1}}_P (\alpha) 
    \right|
    \mathrm{d} \alpha\\
    &\ll_{d, \varepsilon}
    N^{\varepsilon + d/2}
    \left(  
      \left| \frac{|S|}{N^{d/2}} - 1 \right| + 
      {N^{-\frac{d}{2(1+2d)}}}
    \right)^{1/2}.
  \end{align*}
  This proves \cref{uniformity of Sidon in progression}.
\end{proof}

\cref{uniformity of Sidon in progression} implies 
\cref{Equidistribution in intervals,Equidistribution in residue classes},
which show that dense Sidon sets are equidistributed in certain 
structures.

\begin{corollary} 
\label{Equidistribution in intervals}
  Let \(I \coloneq I_1 \times \dots \times I_d\) be a rectangle in \([0,1]^d\) 
  and \(S \subseteq [N]^d\) be a Sidon set.
  If \(|S| \geq \delta N^{d/2}\) for some \(\delta > 0\) then
  \[
    \E_{n \in S} \mathbf{1}_I \left( n/N \right)
    =
    \mathrm{meas} (I) +
    O_{d, \varepsilon, \delta}
    \left(  
      N^{\varepsilon}
      \left(  
        \left| \frac{|S|}{N^{d/2}} - 1 \right| + 
        {N^{-\frac{d}{2(1+2d)}}}
      \right)^{1/2}
    \right)
  \]
  holds for every \(\varepsilon > 0\).
\end{corollary}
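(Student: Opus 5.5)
The plan is to reduce the statement to \cref{uniformity of Sidon in progression} by taking each \(P_i\) to be the set of integers in \(N I_i\) that lie in \([N]\).

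For \(1 \leq i \leq d\), let \(I_i\) have endpoints \(a_i \leq b_i\) in \([0,1]\), and set \(P_i \coloneq \{ m \in [N] : m/N \in I_i \}\). This is an interval of consecutive integers, hence an arithmetic progression in \([N]\) (possibly empty or a singleton). Put \(P = P_1 \times \dots \times P_d\). Then \(\mathbf{1}_I(n/N) = \mathbf{1}_P(n)\) for every \(n \in [N]^d\), so
\[
  \E_{n \in S} \mathbf{1}_I(n/N) = \frac{|S \cap P|}{|S|}.
\]
Applying \cref{uniformity of Sidon in progression} and dividing by \(|S| \geq \delta N^{d/2}\) gives
\[
  \frac{|S \cap P|}{|S|} = \frac{|P|}{N^d} + O_{d,\varepsilon,\delta}\left( N^{\varepsilon} \left( \left| \frac{|S|}{N^{d/2}} - 1 \right| + N^{-\frac{d}{2(1+2d)}} \right)^{1/2} \right).
\]
The lemma is applied with a fixed \(\varepsilon\), and the \(\delta^{-1}\) factor is absorbed into the implied constant. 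If some \(P_i\) is trivial, \cref{lower bound for Fourier L1 norm} does not apply directly. In that case bound \(\int |\widehat{\mathbf{1}}_P|\) instead by the product over the nontrivial factors, using the fact that a singleton factor contributes \(1\) and an empty factor gives \(P = \emptyset\). This only affects constants, since \(\log |P_i| \leq \log N \ll_\varepsilon N^{\varepsilon/d}\) anyway.

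It remains to compare \(|P|/N^d\) with \(\mathrm{meas}(I)\). Each \(|P_i|\) counts the integers in \(N I_i \cap [1,N]\), so \(|P_i| = N(b_i - a_i) + O(1)\), and hence \(|P_i|/N = \mathrm{meas}(I_i) + O(1/N)\). Since \(0 \leq \mathrm{meas}(I_i) \leq 1\) and \(|P_i| \leq N\), telescoping the product gives
\[
  \frac{|P|}{N^d} = \mathrm{meas}(I) + O_d(N^{-1}).
\]
Finally, \(N^{-1} \leq N^{-\frac{d}{4(1+2d)}} \leq N^{\varepsilon} \bigl( N^{-\frac{d}{2(1+2d)}} \bigr)^{1/2}\), because \(\frac{d}{4(1+2d)} < 1\). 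So the \(O(1/N)\) discretisation error is absorbed into the stated error term, which finishes the argument.

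The only step needing care is the degenerate case of trivial progressions together with the \(O(1/N)\) boundary error. Both are routine, so no genuine obstacle is expected.
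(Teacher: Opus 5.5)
Your proposal is correct and follows the paper's argument. The paper likewise applies \cref{uniformity of Sidon in progression} to \(P = \{n \in [N]^d : n \in NI\}\), and uses \(\prod_j(\mathrm{meas}(NI_j)-1) \le |P| \le \prod_j(\mathrm{meas}(NI_j)+1)\) to compare \(|P|/N^d\) with \(\mathrm{meas}(I)\). Your handling of empty or singleton factors, where \cref{lower bound for Fourier L1 norm} does not literally apply, and your explicit absorption of the \(O_d(N^{-1})\) discretisation error into the stated error term supply details the paper leaves implicit.
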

The above result follows by applying \cref{uniformity of Sidon in progression}
to
\(
  P = 
  \left\{ 
    n \in [N]^d : n \in N I
  \right\}
\) 
and observing 
\[
  \prod_{j=1}^{d} (\mathrm{meas}(N I_j) - 1)
  \leq
  |P| 
  \leq 
  \prod_{j=1}^{d} (\mathrm{meas}(N I_j) + 1).
\]

\begin{corollary} 
\label{Equidistribution in residue classes}
  Let \(q_1, \dots, q_d\) be positive integers, 
  \(r \coloneq (r_1, \dots, r_d) \in \mathbb{Z}^d\)
  and \(S \subseteq [N]^d\) be a Sidon set.
  Let \(P\) denote the set \(P_1 \times \dots \times P_d\) 
  where \(P_i \coloneq \left\{ n q_i + r_i : n \in \mathbb{Z} \right\}\).
  If \(|S| \geq \delta N^{d/2}\) for some \(\delta > 0\) then
  \[
    \E_{n \in S} \mathbf{1}_{P} (n)
    =
    \frac{1}{q_1 \cdots q_d} +
    O_{d, \varepsilon, \delta}
    \left(  
      N^{\varepsilon}
      \left(  
        \left| \frac{|S|}{N^{d/2}} - 1 \right| + 
        {N^{-\frac{d}{2(1+2d)}}}
      \right)^{1/2}
    \right)
  \]
  holds for all \(\varepsilon > 0\).
\end{corollary}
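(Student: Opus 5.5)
The plan is to apply \cref{uniformity of Sidon in progression} to the set \(P' \coloneq P \cap [N]^d\) and then normalise by \(|S|\). For each \(i\), the set \(P_i \cap [N]\) is an arithmetic progression in \([N]\) with common difference \(q_i\), possibly empty or a single point. Hence \(P' = (P_1\cap[N]) \times \dots \times (P_d \cap [N])\) is exactly of the form required there. The lemma gives
\[
  |S \cap P'| - \frac{|S|\,|P'|}{N^d}
  \ll_{d,\varepsilon}
  N^{\varepsilon + d/2}
  \left( \left| \frac{|S|}{N^{d/2}} - 1 \right| + N^{-\frac{d}{2(1+2d)}} \right)^{1/2}.
\]
The degenerate cases (empty or singleton progressions) are harmless. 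In those cases \(|S\cap P'|\) and \(|P'|\) are small directly, or one uses the trivial \(L^1\) bound \(\int |\widehat{\mathbf 1}_{P_i}| \le |P_i|\le 1\). The argument is also applied to both \(\widehat{\mathbf 1}_S\) and its negative, so that the lemma gives a two-sided estimate. This is legitimate because its proof bounds the absolute value.

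Since \(S\subseteq [N]^d\), we have \(S\cap P = S\cap P'\). Dividing by \(|S| \ge \delta N^{d/2}\) gives
\[
  \E_{n\in S}\mathbf 1_P(n) = \frac{|P'|}{N^d} + O_{d,\varepsilon,\delta}\!\left( N^{\varepsilon}\left( \left| \frac{|S|}{N^{d/2}} - 1 \right| + N^{-\frac{d}{2(1+2d)}} \right)^{1/2}\right).
\]

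It remains to compare \(|P'|/N^d\) with \(1/(q_1\cdots q_d)\). Each factor satisfies \(\bigl||P_i\cap[N]| - N/q_i\bigr| \le 1\), so
\[
  \frac{|P'|}{N^d} = \prod_{i=1}^d \left(\frac{1}{q_i} + O\!\left(\frac1N\right)\right) = \frac{1}{q_1\cdots q_d} + O_d\!\left(\frac1N\right).
\]
Finally, \(1/N \le N^{-\frac{d}{2(1+2d)}}\), and this is at most \(N^{\varepsilon}\bigl(N^{-\frac{d}{2(1+2d)}}\bigr)^{1/2}\) because \(\frac{d}{4(1+2d)} < 1\). So this error is absorbed into the stated error term, which proves the corollary.

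The only delicate point is bookkeeping, and I expect no real obstacle. One must check that the implied constant does not depend on the \(q_i\). It does not: the \(L^1\) bound of \cref{lower bound for Fourier L1 norm} is \(\prod \log|P_i\cap[N]| \ll (\log N)^d \ll_{\varepsilon} N^{\varepsilon}\), uniformly in the \(q_i\). One must also handle the degenerate progressions \(|P_i \cap [N]| \le 1\), where \(\log|P_i|\) is replaced by a constant.
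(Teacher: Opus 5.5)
Your proposal is correct and follows the paper's argument: apply \cref{uniformity of Sidon in progression} to \(P\cap[N]^d\), divide by \(|S|\ge\delta N^{d/2}\), and replace \(|P\cap[N]^d|/N^d\) by \(1/(q_1\cdots q_d)\) at a cost of \(O_d(1/N)\), which is absorbed into the stated error term. Your factor-wise bound \(\bigl||P_i\cap[N]| - N/q_i\bigr|\le 1\) is slightly more robust than the paper's displayed product lower bound \(\prod_i(N/q_i-1)\), which can fail when an even number of the \(q_i\) exceed \(N\); your treatment of empty or singleton progressions also fills in details the paper leaves implicit.
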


\cref{Equidistribution in residue classes} follows by applying
\cref{uniformity of Sidon in progression} and using the relation
\begin{align*}
  \prod_{i=1}^{d}\left( \frac{N}{q_i} - 1 \right)
  \leq
  |P \cap [N]^d|
  \leq 
  \prod_{i=1}^{d}\left( \frac{N}{q_i} + 1 \right).
\end{align*}
Now we show the equidistribution of dense Sidon subsets of \([N]^d\)
in regular Bohr subsets, by using the arguments of 
\cite{OrtegaPrendiville_extremal-Sidon-sets}.
A function \(F : \mathbb{T}^k \to \mathbb{R}\) is said to be 
\emph{\(M\)-Lipschitz}
if given any \(\theta, \theta' \in \mathbb{T}^k\) we have 
\[
  \left| F(\theta) - F(\theta') \right|
  \leq 
  M \left\| \theta - \theta' \right\|.
\]

\begin{lemma} \cite[Lemma~4.1]{OrtegaPrendiville_extremal-Sidon-sets}
\label{Trigonometric approximation}
  Let \(F : \mathbb{T}^{k} \to [0,1]\) be an \(M\)-Lipschitz function
  for some \(M \geq 1\).
  Then for every \(\varepsilon > 0\) there exists a function
  \(F_\varepsilon : \mathbb{T}^{k} \to [0,1]\) with 
  \(
    \left\| F - F_\varepsilon \right\|_\infty 
    \leq
    \varepsilon
  \)
  and satisfying 
  \[
    F_\varepsilon(\theta)
    =
    \sum_{y \in [-Y,Y]^k}
    \widehat{F}_\varepsilon(y) e(\theta \cdot y)
  \]
  for some \(Y \ll M^2 / \varepsilon^3\).
\end{lemma}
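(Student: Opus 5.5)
We prove \cref{Trigonometric approximation} by convolving \(F\) with a nonnegative trigonometric-polynomial approximate identity on \(\mathbb{T}^k\), namely a product of one-dimensional Fej\'er kernels. For a positive integer \(Y\) let
\[
  K_Y(\theta) \coloneq \prod_{j=1}^{k} \sum_{|y_j| \leq Y} \Bigl(1 - \frac{|y_j|}{Y+1}\Bigr) e(y_j \theta_j),
\]
which is nonnegative and satisfies \(\int_{\mathbb{T}^k} K_Y = 1\). We then set \(F_\varepsilon \coloneq F \ast K_Y\), where \(F \ast K_Y(\theta) = \int_{\mathbb{T}^k} F(\theta - \phi) K_Y(\phi)\,\mathrm{d}\phi\).

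\textbf{Step 1 (the structural properties).} Since \(K_Y \geq 0\) has mass one and \(0 \leq F \leq 1\), the convolution \(F_\varepsilon\) is an average of values of \(F\), so it takes values in \([0,1]\). Moreover \(\widehat{F_\varepsilon}(y) = \widehat{F}(y)\widehat{K_Y}(y)\), and \(\widehat{K_Y}\) is supported on \([-Y,Y]^k\). Hence \(F_\varepsilon\) is a trigonometric polynomial, and it equals its finite Fourier expansion over \(y \in [-Y,Y]^k\), as required.

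\textbf{Step 2 (the approximation error).} Using the Lipschitz property,
\[
  |F_\varepsilon(\theta) - F(\theta)| \leq \int_{\mathbb{T}^k} |F(\theta - \phi) - F(\theta)| K_Y(\phi)\,\mathrm{d}\phi \leq M \int_{\mathbb{T}^k} \|\phi\| K_Y(\phi)\,\mathrm{d}\phi .
\]
It remains to bound the first moment of \(K_Y\). Since \(\|\phi\| = \max_j \|\phi_j\| \leq \sum_j \|\phi_j\|\) and each factor of \(K_Y\) has mass one, this moment is at most \(k\) times the one-dimensional moment \(\int_{\mathbb{T}} \|t\| \,\mathcal{K}_Y(t)\,\mathrm{d}t\), where \(\mathcal{K}_Y(t) = \frac{1}{Y+1}\bigl(\frac{\sin(\pi(Y+1)t)}{\sin \pi t}\bigr)^2\) is the one-dimensional Fej\'er kernel.

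\textbf{Step 3 (the moment bound, the main obstacle).} The Fej\'er kernel has a logarithmic first moment, of order \(\log Y / Y\), which is slightly worse than \(1/Y\). Two routes are available.
\begin{itemize}
  \item Accept the logarithmic moment. Requiring \(Mk\log Y / Y \leq \varepsilon\) gives \(Y \ll_k (M/\varepsilon)\log(M/\varepsilon)\). Since \(M \geq 1\), and assuming \(\varepsilon \leq 1\) (the case \(\varepsilon \geq 1\) being trivial), this is \(\ll M^2/\varepsilon^3\).
  \item Avoid the logarithm with Cauchy--Schwarz: \(\int \|t\|\mathcal{K}_Y \leq (\int \|t\|^2 \mathcal{K}_Y)^{1/2} \ll Y^{-1/2}\), using \(\int \|t\|^2 \mathcal{K}_Y \ll 1/Y\). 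Then \(Y \asymp k^2 M^2/\varepsilon^2\) suffices, which is again \(\ll M^2/\varepsilon^3\).
\end{itemize}
Both routes yield the stated bound with room to spare. The implied constant depends on \(k\), consistent with how the lemma is used later with \(k\) fixed.
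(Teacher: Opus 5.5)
Your argument is correct. The product Fej\'er kernel is a nonnegative trigonometric polynomial of mass one with frequencies in \([-Y,Y]^k\), so \(F \ast K_Y\) has the required range and the required finite Fourier expansion. Either of your moment bounds gives \(\|F - F \ast K_Y\|_\infty \leq \varepsilon\) with \(Y\) well below \(M^2/\varepsilon^3\) when \(\varepsilon \leq 1\): the logarithmic one \(\ll_k \log Y / Y\), or the weaker but still sufficient \(\ll_k Y^{-1/2}\) from Cauchy--Schwarz. For \(\varepsilon \geq 1\), a constant function with \(Y = 0\) works.

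The paper gives no proof of this lemma and only refers to Appendix~B of Ortega--Prendiville. The shape \(M^2/\varepsilon^3\) is exactly what the same Fej\'er convolution yields under a cruder error analysis. Split at \(\|\phi\| = \delta = \varepsilon/(2M)\). Use the Lipschitz bound on \(\|\phi\| \leq \delta\), and the pointwise bound \(\mathcal{K}_Y(t) \leq 1/(4Y\|t\|^2)\) on the tail. This gives an error of at most \(M\delta + k/(4Y\delta^2)\), hence \(Y \asymp k M^2/\varepsilon^3\). So your proof is the same approximate-identity argument with a sharper first-moment estimate. That improves the admissible \(Y\), though the application does not need it.

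As you note, the implied constant depends on \(k\), and with a product kernel some growth in \(k\) is unavoidable. This is harmless: the lemma is only used with \(k\) fixed, and the resulting bounds are stated as \(O_{d,k,\delta}\).
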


A proof of the above result can be found in 
\cite[Appendix B]{OrtegaPrendiville_extremal-Sidon-sets}.
For elements $n \coloneq (n_1, \dots, n_d) \in \mathbb{Z}^d$ and 
$\alpha \coloneq (\alpha_1, \dots, \alpha_k) \in \mathbb{T}^{dk}$
where \(\alpha_i \in \mathbb{T}^d\) for all \(1 \leq i \leq k\) 
we write
\[
  n \odot \alpha \coloneq (n \cdot \alpha_1, \dots, n \cdot \alpha_k).
\]

\begin{lemma} 
\label{Equidistribution for trigonometric functions}
  Let \(F : \mathbb{T}^{k} \to [0,1]\) be an \(M\)-Lipschitz function
  for some \(M \geq 1\).
  Let \(S \subseteq [N]^d\) be a Sidon set with 
  \(|S| \geq \delta N^{d/2}\) for some \(\delta > 0\).
  Then for every \(\alpha \in \mathbb{T}^{dk}\) we have
  \[
    \E_{n \in S} F (n \odot \alpha)
    =
    \E_{n \in [N]^d} F (n \odot \alpha) +
    O_{d, k, \delta}
    \left(  
      M^{2/3}
      \left(  
        \left| \frac{|S|}{N^{d/2}} - 1 \right| + 
        {N^{-\frac{d}{2(1+2d)}}}
      \right)^{1/8k}
    \right).
  \]
\end{lemma}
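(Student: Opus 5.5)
Approximate \(F\) by the trigonometric polynomial \(F_\eta\) of \cref{Trigonometric approximation}, with \(\eta>0\) a parameter to be optimised at the end. This gives \(\|F-F_\eta\|_\infty\le\eta\) and frequency cut-off \(Y\ll M^2/\eta^3\). Replacing \(F\) by \(F_\eta\) inside both averages \(\E_{n\in S}\) and \(\E_{n\in[N]^d}\) costs at most \(2\eta\). It therefore suffices to compare the two averages of \(F_\eta(n\odot\alpha)\).

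Expanding the Fourier series, for \(y=(y_1,\dots,y_k)\in[-Y,Y]^k\) we have \(e\big((n\odot\alpha)\cdot y\big)=e(n\cdot\beta_y)\) with \(\beta_y\coloneq y_1\alpha_1+\dots+y_k\alpha_k\in\mathbb{T}^d\). Hence
\[
  \E_{n\in S}F_\eta(n\odot\alpha)-\E_{n\in[N]^d}F_\eta(n\odot\alpha)
  =\frac{1}{|S|}\sum_{y\in[-Y,Y]^k}\widehat{F}_\eta(y)\Big(\widehat{\mathbf{1}}_S-\frac{|S|}{N^d}\widehat{\mathbf{1}}_{[N]^d}\Big)(\beta_y).
\]
We have \(|\widehat{F}_\eta(y)|\le\|F_\eta\|_\infty\le1\). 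Bounding each bracket by \cref{Fourier uniformity} and using \(|S|\ge\delta N^{d/2}\), the difference is
\[
  \ll_{d,k,\delta} Y^k\Delta^{1/2}\ll (M^2/\eta^3)^k\Delta^{1/2},
  \qquad
  \Delta\coloneq\Big|\frac{|S|}{N^{d/2}}-1\Big|+N^{-\frac{d}{2(1+2d)}}.
\]
We count \((2Y+1)^k\) frequencies here; the \(O_{d,k,\delta}\) constant absorbs the factor \(3^k\).

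It remains to optimise \(\eta+M^{2k}\eta^{-3k}\Delta^{1/2}\). We may assume \(\Delta\) is small, since otherwise the bound is trivial: both averages lie in \([0,1]\). Balancing the two terms gives \(\eta=M^{2k/(3k+1)}\Delta^{1/(2(3k+1))}\). The exponent \(1/(6k+2)\) of \(\Delta\) is at least \(1/8k\) for \(k\ge1\), and \(\Delta\ll_d 1\) by Cilleruelo's bound. Also \(M^{2k/(3k+1)}\le M^{2/3}\) because \(M\ge1\). So one should set \(\eta=M^{2/3}\Delta^{1/8k}\) (or \(\eta=1\) if this exceeds \(1\)), giving the claimed error.

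The main obstacle is this last bookkeeping step, in particular checking the exponents. With \(\eta=M^{2/3}\Delta^{1/8k}\) the second term becomes \(M^{2k-2k}\Delta^{1/2-3/8}=\Delta^{1/8}\), which is \(\le\Delta^{1/8k}\) since \(\Delta\ll1\). Everything is therefore \(\ll M^{2/3}\Delta^{1/8k}\). The remaining case \(M^{2/3}\Delta^{1/8k}\ge1\) is trivial.
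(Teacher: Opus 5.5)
Your proposal is correct and follows the paper's proof. Both arguments approximate $F$ by \cref{Trigonometric approximation}, expand in Fourier series to get a bound of $Y^k$ times the estimate from \cref{Fourier uniformity} (divided by $|S|\ge\delta N^{d/2}$), and then optimise $\eta+M^{2k}\eta^{-3k}\Delta^{1/2}$. The only difference is in the final bookkeeping: the paper takes the balanced choice $\varepsilon^{3k+1}=M^{2k}|S|^{-1}\bigl\|\widehat{\mathbf{1}}_S-\tfrac{|S|}{N^d}\widehat{\mathbf{1}}_{[N]^d}\bigr\|_\infty$ and then weakens $M^{2k/(3k+1)}\Delta^{1/(6k+2)}$ to $M^{2/3}\Delta^{1/8k}$, whereas you plug in $\eta=M^{2/3}\Delta^{1/8k}$ directly and dispose of the case $\eta\ge 1$ as trivial, which is equally valid.
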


\begin{proof}
  Let \(\varepsilon > 0\) be a real number to be determined,
  and \(\alpha \in \mathbb{T}^{dk}\).
  Applying \cref{Trigonometric approximation} we obtain a 
  function \(F_\varepsilon : \mathbb{T}^k \to [0,1]\) with 
  \(
    \left\| F - F_\varepsilon \right\|_\infty 
    \leq
    \varepsilon
  \)
  and satisfying 
  \[
    F_\varepsilon(\theta)
    =
    \sum_{y \in [-Y,Y]^k}
    \widehat{F}_\varepsilon(y) e(\theta \cdot y)
  \]
  for some \(Y \ll M^2 / \varepsilon^3\).
  It follows that
  \begin{align*}
    &\left|  
      \sum_{n \in S} F_\varepsilon (n \odot \alpha)
      -
      \frac{|S|}{N^d}
      \sum_{n \in [N]^d} F_\varepsilon (n \odot \alpha)
    \right| \\
    &=
    \left|  
      \sum_{y \in [-Y,Y]^k}
      \widehat{F}_\varepsilon(y) 
      \left(  
        \sum_{n \in S}
        e( (n \odot \alpha) \cdot y) - 
        \frac{|S|}{N^d}
        \sum_{n \in [N]^d}
        e( (n \odot \alpha) \cdot y)
      \right) 
    \right| \\
    &\ll_k
    Y^k 
    \left\| 
      \widehat{\mathbf{1}}_S -
      \frac{|S|}{N^d} \widehat{\mathbf{1}}_{[N]^d}
    \right\|_\infty.
  \end{align*}
  Note that 
  \[
    \left|  
      \E_{n \in S}
      \left( F (n \odot \alpha) - F_\varepsilon (n \odot \alpha) \right)
    \right|
    \leq
    \varepsilon
  \]
  and 
  \[
    \left|  
      \E_{n \in [N]^d}
      \left( F (n \odot \alpha) - F_\varepsilon (n \odot \alpha) \right)
    \right|
    \leq
    \varepsilon.
  \]
  Therefore, using the triangle inequality and the relation \(Y \ll M^2 / \varepsilon^3\),
  we deduce that
  \begin{align*}
    \left|  
      \E_{n \in S} F (n \odot \alpha)
      -
      \E_{n \in [N]^d} F (n \odot \alpha)
    \right|
    \ll_k
    \varepsilon +
    \frac{M^{2k}}{\varepsilon^{3k} |S|} 
    \left\| 
      \widehat{\mathbf{1}}_S -
      \frac{|S|}{N^d} \widehat{\mathbf{1}}_{[N]^d}
    \right\|_\infty.
  \end{align*}
  Taking \(\varepsilon > 0\) satisfying
  \[
    \varepsilon ^{3k+1}
    =
    \frac{M^{2k}}{|S|}
    \left\| 
      \widehat{\mathbf{1}}_S -
      \frac{|S|}{N^d} \widehat{\mathbf{1}}_{[N]^d}
    \right\|_\infty
  \]
  and using \cref{Fourier uniformity} we obtain that
  \begin{align*}
    \left|  
      \E_{n \in S} F (n \odot \alpha)
      -
      \E_{n \in [N]^d} F (n \odot \alpha)
    \right|
    &\ll_k
    \left(  
      \frac{M^{2k}}{|S|}
      \left\| 
        \widehat{\mathbf{1}}_S -
        \frac{|S|}{N^d} \widehat{\mathbf{1}}_{[N]^d}
      \right\|_\infty
    \right)^{1/(3k + 1)} \\
    &\ll_{d, k, \delta}
    \left(  
      M^{2k}
      \left(  
        \left| \frac{|S|}{N^{d/2}} - 1 \right| + 
        {N^{-\frac{d}{2(1+2d)}}}
      \right)^{1/2}
    \right)^{1/(3k + 1)}.
  \end{align*}
  Note that we have \(M \geq 1\) and 
  \[
    \left| \frac{|S|}{N^{d/2}} - 1 \right| < 1
  \]
  for all large \(N\).
  Therefore, we deduce that 
  \[
    \left|  
      \E_{n \in S} F (n \odot \alpha)
      -
      \E_{n \in [N]^d} F (n \odot \alpha)
    \right|
    \ll_{d, k, \delta} 
    M^{2/3}
      \left(  
        \left| \frac{|S|}{N^{d/2}} - 1 \right| + 
        {N^{-\frac{d}{2(1+2d)}}}
      \right)^{1/8k}.
  \]
  This proves \cref{Equidistribution for trigonometric functions}.
\end{proof}
For an element \(\alpha \in \mathbb{T}^{dk}\)
and a positive real number \(\eta < 1\), the Bohr set
\[
  B_{\eta, \alpha}(N)
  \coloneq
  \left\{  
    n \in [N]^d 
    :
    \left\| n \odot \alpha \right\| 
    \leq 
    \eta 
  \right\}
\]
is said to be \emph{regular} if for every \(0 < \kappa < 1\) the relation
\begin{align*}
  1 - \kappa
  \leq 
  \frac{|B_{\eta_1, \alpha}(N_1)|}{|B_{\eta, \alpha}(N)|}
  \leq
  1 + \kappa
\end{align*}
holds for all \(\eta_1\) and \(N_1\) satisfying 
\(|\eta - \eta_1| < \kappa \eta / 100k\) and
\(|N - N_1| < \kappa N / 100k\).

\begin{lemma} \label{Higher-dimensional regular Bohr sets}
  Given any element \(\alpha \in \mathbb{T}^{dk}\) and
  positive real numbers \(\eta < 1\) and \(N\), 
  there exist real numbers \(\eta_1 \in (\eta / 2, \eta)\) and 
  \(N_1 \in (N/2, N)\) such that 
  the Bohr set \(B_{\eta_1, \alpha}(N_1)\) is regular.  
\end{lemma}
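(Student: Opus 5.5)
We follow the standard argument of Bourgain for the existence of regular Bohr sets, as adapted in \cite{OrtegaPrendiville_extremal-Sidon-sets}, now run in two parameters \((\eta, N)\). The plan is to consider the function
\[
  \Phi(x, y) \coloneq \log_2 \left| B_{2^{x}\eta/2,\, \alpha}(2^{y} N/2) \right|, \qquad (x,y) \in [0,1]^2,
\]
which is nondecreasing in each variable, since \(B_{\eta',\alpha}(N') \subseteq B_{\eta'',\alpha}(N'')\) whenever \(\eta' \leq \eta''\) and \(N' \leq N''\).

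The first step is a size comparison. We aim to show that \(\Phi(1,1) - \Phi(0,0) \ll_{d,k} 1\), that is,
\[
  |B_{\eta,\alpha}(N)| \leq C(d,k) |B_{\eta/2,\alpha}(N/2)|.
\]
For this we cover \([N]^d\) by \(2^d\) translates of \([N/2]^d\), and cover the torus region \(\{\|\theta\| \leq \eta\}\) in \(\mathbb{T}^k\) by \(O(4^k)\) boxes of side \(\eta/2\). By pigeonholing, one cell contains at least a \(C^{-1}\) proportion of \(B_{\eta,\alpha}(N)\). Differencing that cell against a fixed element places its difference set inside the symmetric set \(\{n : |n_i| \leq N/2,\ \|n \odot \alpha\| \leq \eta/2\}\). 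This differenced set is not literally \(B_{\eta/2,\alpha}(N/2)\), since it lives in a box centered at \(0\) rather than in \([N/2]^d\). We fix this by running the same covering argument with \(\eta/4\) and \(N/4\) and absorbing the resulting constants. We do not need the lower end to be nonempty: if \(B\) is empty at the small scale, we instead restrict to the range where it is nonempty, or note that \(0\) is excluded and \(1\) may fail to lie in the set, so some care is needed here.

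The second step is the Bourgain covering argument. Since \(\Phi\) is monotone with bounded total increase \(L \ll_{d,k} 1\), we look for a point \((x_0, y_0)\) such that
\[
  |\Phi(x_0 + u, y_0 + v) - \Phi(x_0, y_0)| \leq 100 L (|u| + |v|)
\]
for all small \(u, v\). We try first the diagonal \(t \mapsto \Phi(t,t)\). By monotonicity in both variables, controlling the diagonal increments at \((x_0 - \tau, y_0 - \tau)\) and \((x_0 + \tau, y_0 + \tau)\) bounds every rectangle increment \(\Phi(x_0 \pm u, y_0 \pm v)\) with \(|u|, |v| \leq \tau\). This reduces matters to a one-variable nondecreasing function \(g(t) = \Phi(t,t)\) on \([0,1]\) with \(g(1) - g(0) \leq L\). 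A Vitali covering argument then shows that the set of \(t \in [1/4, 3/4]\) which are bad, meaning \(|g(t+\tau) - g(t-\tau)| > 100 L \tau\) for some \(\tau\), has measure less than \(1/2\). Any good \(t_0\) therefore works.

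The last step converts the good point into regularity. Set \(\eta_1 = 2^{t_0}\eta/2 \in (\eta/2, \eta)\) and \(N_1 = 2^{t_0} N/2\). The hypotheses \(|\eta_1 - \eta'| < \kappa\eta_1/100k\) and \(|N_1 - N'| < \kappa N_1/100k\) translate into logarithmic shifts \(\tau \ll \kappa/(100k)\). The bound then gives
\[
  \left| \log_2 \frac{|B_{\eta',\alpha}(N')|}{|B_{\eta_1,\alpha}(N_1)|} \right| \leq 100 L \tau.
\]
This is at most \(\log_2(1+\kappa)\) as required, provided \(L\) is compatible with the \(100k\) normalization. The main obstacle is exactly this constant matching: \(L\) depends on \(d\) as well as \(k\), while the definition of regularity only has \(100k\). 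If the constants do not match, we will either adjust the regularity definition to \(100(k+d)\), or get the dependence of \(L\) down to \(O(k + d)\) by using dyadic rather than crude covering constants. In the latter case each doubling contributes \(k\) to \(L\) from the torus and \(d\) from the box.
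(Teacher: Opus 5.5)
The paper gives no proof of this lemma; it only says Bourgain's argument carries over. Your two-parameter version of that argument is the natural way to implement the pointer, and the diagonal reduction to \(g(t)=\Phi(t,t)\) by monotonicity is fine. The proof breaks at Step~1, however.

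\textbf{Step 1 is false for \(d\ge 2\).} The bound \(|B_{\eta,\alpha}(N)|\le C(d,k)\,|B_{\eta/2,\alpha}(N/2)|\) does not hold. Your covering argument only bounds \(|B_{\eta,\alpha}(N)|\) by a multiple of the size of the \emph{symmetric} set \(\{x\in\mathbb{Z}^d:|x_i|<N/2,\ \|x\odot\alpha\|\le\eta/2\}\). For \(d\ge2\) this set is not contained in \([N/2]^d\cup(-[N/2]^d)\cup\{0\}\), because of the mixed-sign orthants and the coordinate hyperplanes, so rerunning at scales \(\eta/4,N/4\) does not help. For instance, take \(d=2\), \(k=1\), \(\alpha=(\beta,\beta)\) with \(\beta=2/(3N)\), and \(N^{-1/2}\le\eta\le 1/10\). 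Then \(n\odot\alpha=\beta(n_1+n_2)\), and \(B_{\eta,\alpha}(N)\) contains every \(n\in[N]^2\) with \(|n_1+n_2-3N/2|\le 3N\eta/2\), that is \(\asymp N^2\eta\) points. On \([N/2]^2\), however, \(\beta(n_1+n_2)\le 2/3\), so \(B_{\eta/2,\alpha}(N/2)=\{n\in[N/2]^2: n_1+n_2\le 3N\eta/4\}\) has only \(\asymp N^2\eta^2\) points. Hence \(L\ge\log_2(1/\eta)-O(1)\) is unbounded, and the maximal-function step only gives a point with local logarithmic growth \(O(L)\), which is useless.

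\textbf{Emptiness is a real obstruction, not a side issue}, since \(0\notin[N]^d\). Take \(d=k=1\), \(\alpha=1/s_0\) with \(s_0\) an integer in \((0.995N,N)\), and \(\eta<1/(2s_0)\). Then \(B_{\eta_1,\alpha}(N_1)=\{s_0\}\cap[1,N_1]\). For \(N_1<s_0\) this is empty, so the ratios in the definition are undefined. For \(N_1\ge s_0\), the choice \(\kappa=1/2\) admits some \(N'<s_0\), which gives ratio \(0\). So no admissible \((\eta_1,N_1)\) is regular.

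\textbf{The constant mismatch you flag cannot be fixed by a sharper \(L\).} The statement as written fails when \(d\) is large compared with \(k\). Take \(k=1\), \(d=100\), \(\alpha=0\). Then \(B_{\eta_1,\alpha}(N_1)=[N_1]^d\) for every \(\eta_1\). With \(\kappa=1/2\) and \(N'\) just below \(N_1(1+1/200)\), the ratio is about \(1.005^{100}\approx 1.65>3/2\), for every \(N_1\in(N/2,N)\) once \(N\) is large.

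\textbf{Consequence.} Any correct argument must prove a modified statement. One option is Bohr sets over symmetric boxes \(\{|n_i|\le N\}\): these contain \(0\), and for them your covering bound does hold with \(L\) of order \(k+d\). The normalisation \(100k\) must also be replaced by one of order \(k+d\). Adjusting the definition, as you suggest, changes the lemma rather than proving it.

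\textbf{Step 2 is also too lossy.} With \(\tau\approx\kappa/(100k\ln 2)\), your threshold \(100L\) gives \(|\log_2(\text{ratio})|\lesssim L\kappa/(k\ln 2)\). This stays below \(\log_2(1+\kappa)\) only if \(L\le k\), and halving \(\eta\) alone typically already costs about \(k\). A Vitali covering argument gives a constant of about \(12\) here, not \(100\).
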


\cref{Higher-dimensional regular Bohr sets} can be proved using the arguments of 
\cite[Lemma~3.0]{Bourgain_Bohr_set},
where 
Bourgain proved the above result for \(k = 1\).

\begin{corollary} \label{Equidistribution in regular Bohr sets}
  Let \(B \coloneq B_{\eta, \alpha}(N)\) be a regular Bohr set
  for some \(\alpha \in \mathbb{T}^{dk}\) and \(0 < \eta \leq 1/2\),
  and \(S \subseteq [N]^d\) be a Sidon set with 
  \(|S| \geq \delta N^{d/2}\) for some \(\delta > 0\).
  Then we have 
  \[
    \E_{n \in S} \mathbf{1}_B(n)
    =
    \E_{n \in [N]^d} \mathbf{1}_B(n) +
    O_{d, k, \delta}
    \left(  
      \frac{1}{\eta^{2/3}}
      \left(  
        \left| \frac{|S|}{N^{d/2}} - 1 \right| + 
        {N^{-\frac{d}{2(1+2d)}}}
      \right)^{1/14k}
    \right).
  \]
\end{corollary}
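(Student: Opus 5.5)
The plan is to sandwich \(\mathbf{1}_B\) between two Lipschitz functions of \(n \odot \alpha\), apply \cref{Equidistribution for trigonometric functions} to each, and then use regularity of \(B\) to show that the two sandwiching functions have almost the same average over \([N]^d\).

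\textbf{Sandwich.} Fix a parameter \(0<\kappa<1\), to be chosen at the end. Write \(\rho\) for the error term \(\left|\frac{|S|}{N^{d/2}}-1\right|+N^{-\frac{d}{2(1+2d)}}\). Define \(F^{\pm}:\mathbb{T}^k\to[0,1]\) by
\[
  F^{+}(\theta)=\min\{1,\max\{0,1-(\|\theta\|-\eta)/(\kappa\eta/200k)\}\},
  \qquad
  F^{-}(\theta)=\min\{1,\max\{0,(\eta-\|\theta\|)/(\kappa\eta/200k)\}\}.
\]
These satisfy
\[
  \mathbf{1}_{\|\theta\|\le \eta(1-\kappa/200k)}\le F^-(\theta)\le \mathbf{1}_{\|\theta\|\le\eta}\le F^+(\theta)\le \mathbf{1}_{\|\theta\|\le \eta(1+\kappa/200k)}.
\]
Since \(\|\cdot\|\) is a max of one-dimensional distances, it is \(1\)-Lipschitz, so both \(F^\pm\) are \(M\)-Lipschitz with \(M\ll k/(\kappa\eta)\). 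Because \(S\subseteq[N]^d\), we have \(\mathbf{1}_B(n)=\mathbf{1}_{\|n\odot\alpha\|\le\eta}\) for every \(n\in S\).

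\textbf{Transfer to \([N]^d\).} Applying \cref{Equidistribution for trigonometric functions} to \(F^\pm\) gives
\[
  \E_{n\in[N]^d}F^-(n\odot\alpha)-E\le \E_{n\in S}\mathbf{1}_B(n)\le \E_{n\in[N]^d}F^+(n\odot\alpha)+E,
\]
with \(E\ll_{d,k,\delta}(\kappa\eta)^{-2/3}\rho^{1/8k}\), where the powers of \(k\) are absorbed into the implied constant.

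\textbf{Regularity.} By the sandwich inequalities,
\[
  \E_{[N]^d}F^+-\E_{[N]^d}\mathbf{1}_B\le \frac{|B_{\eta(1+\kappa/200k),\alpha}(N)|-|B|}{N^d}.
\]
The dilated radius \(\eta_1=\eta(1+\kappa/200k)\) satisfies \(|\eta_1-\eta|<\kappa\eta/100k\), so regularity of \(B\) bounds the right-hand side by \(\kappa|B|/N^d\le\kappa\). The same argument with radius \(\eta(1-\kappa/200k)\) bounds \(\E_{[N]^d}\mathbf{1}_B-\E_{[N]^d}F^-\) by \(\kappa\). Combining with the previous step,
\[
  \left|\E_{n\in S}\mathbf{1}_B(n)-\E_{n\in[N]^d}\mathbf{1}_B(n)\right|\ll_{d,k,\delta}\kappa+\kappa^{-2/3}\eta^{-2/3}\rho^{1/8k}.
\]

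\textbf{Optimisation.} Balancing the two terms means \(\kappa^{5/3}=\eta^{-2/3}\rho^{1/8k}\), i.e. \(\kappa=\eta^{-2/5}\rho^{3/40k}\). This gives a bound of \(\eta^{-2/5}\rho^{3/40k}\). Since \(\eta\le1/2\) we have \(\eta^{-2/5}\le\eta^{-2/3}\), and since \(\rho<1\) for large \(N\) while \(3/40k\ge 1/14k\), we have \(\rho^{3/40k}\le\rho^{1/14k}\). Hence the bound is at most \(\eta^{-2/3}\rho^{1/14k}\), which is the claimed statement. If the balancing \(\kappa\) would be \(\ge1\), the claimed bound already exceeds \(1\) and the statement is trivial, because both averages lie in \([0,1]\).

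\textbf{Main obstacle.} The step needing the most care is the regularity step. One must check that the widened and shrunken Bohr sets fall within the regularity window (only \(\eta\) is perturbed, with \(N_1=N\), which the definition permits). One must also keep the Lipschitz constant tracked exactly as \(M\asymp k/(\kappa\eta)\), so that the \(\eta^{-2/3}\) dependence comes out as stated.
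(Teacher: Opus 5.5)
Your proposal is correct and follows essentially the same route as the paper: you sandwich \(\mathbf{1}_B\) between Lipschitz cutoffs of \(n\odot\alpha\), apply \cref{Equidistribution for trigonometric functions}, use regularity with only the radius perturbed (\(N_1=N\)) to compare the two cutoff averages over \([N]^d\), and then optimise the transition width. The differences are cosmetic. The paper builds its cutoffs as products of one-dimensional trapezoids with absolute width \(\varepsilon<\eta/200k\), rather than as functions of \(\|\theta\|\) with relative width \(\kappa\eta/200k\). Your balancing also yields the slightly sharper intermediate bound \(\eta^{-2/5}\rho^{3/40k}\), against the paper's \(\eta^{-2/3}\rho^{3/40k}\), and both imply the stated estimate.
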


\begin{proof}
  Note that if \(\eta = 1/2\) then \(B = [N]^d\), and hence, the result holds.
  Assume that \(\eta < 1/2\).
  Let \(\varepsilon \leq 1\)
  be a positive real number to be determined later. 
  Define the functions \(G_1, G_2 : [0,1] \to \mathbb{R}\) as 
  \[
    G_1(\theta)
    =
    \begin{cases}   
      \min 
      \left\{  1,
        \frac{\eta - \theta}{\varepsilon}
      \right\}
      &\text{ if } \theta \in [0, \eta] \\  
      0 &\text{ if } \theta \in (\eta, 1 - \eta) \\       
      \min 
      \left\{  1,
        \frac{\theta + \eta - 1}{\varepsilon}
      \right\}
      &\text{ if } \theta \in [1 - \eta, 1]
    \end{cases}
  \]
  and
  \[
    G_2(\theta)
    =
    \begin{cases} 
      1 &\text{ if } \theta \notin (\eta, 1 - \eta) \\      
      \max 
      \left\{  0,
        \frac{\eta + \varepsilon - \theta}{\varepsilon} ,
        \frac{\theta + \eta + \varepsilon - 1}{\varepsilon} 
      \right\}
      &\text{ if } \theta \in (\eta, 1 - \eta).
    \end{cases}
  \]
  Note that the functions \(G_1\) and \(G_2\) can be considered as functions
  on \(\mathbb{T}\).
  For every \(j \in \left\{ 1,2 \right\}\) 
  define a function \(F_j : \mathbb{T}^k \to \mathbb{R}\) such that 
  for any \(\theta \coloneq (\theta_1, \dots, \theta_k) \in \mathbb{T}^k\) we have 
  \[
    F_j(\theta) = 
    \prod_{i=1}^{k} G_j(\theta_i).
  \]
  Using a telescoping identity, observe that the functions 
  \(F_1, F_2\) are \(k/\varepsilon\)-Lipschitz, and 
  \[
    F_1(n \odot \alpha) \leq \mathbf{1}_B(n) \leq F_2(n \odot \alpha)
  \] 
  holds for all \(n \in [N]^d\).
  Therefore,
  \cref{Equidistribution for trigonometric functions} implies that there exists
  a positive constant \(C\) depending on \(d,k,\delta\) such that 
  \begin{multline*}
    \E_{n \in [N]^d} F_1(n \odot \alpha) -
    C 
    \left( \frac{k}{\varepsilon} \right)^{2/3}
    \left(  
      \left| \frac{|S|}{N^{d/2}} - 1 \right| + 
      {N^{-\frac{d}{2(1+2d)}}}
    \right)^{1/8k}
    \leq
    \E_{n \in S} \mathbf{1}_B(n) \\
    \leq
    \E_{n \in [N]^d} F_2(n \odot \alpha) +
    C 
    \left( \frac{k}{\varepsilon} \right)^{2/3}
    \left(  
      \left| \frac{|S|}{N^{d/2}} - 1 \right| + 
      {N^{-\frac{d}{2(1+2d)}}}
    \right)^{1/8k}
  \end{multline*}
  holds.
  Assume that \(\varepsilon < \eta/200 k\). 
  The regularity property of \(B\) implies that
  \begin{align*} 
    \left| 
      \E_{n \in [N]^d} F_j (n \odot \alpha)
      -
      \E_{n \in [N]^d} \mathbf{1}_B(n)
    \right|
    \leq
    \frac{200k \varepsilon}{\eta}
  \end{align*}
  holds for \(j = 1,2\).
  Hence, we obtain that 
  \begin{align*}
    \E_{n \in S} \mathbf{1}_B(n)
    -
    \E_{n \in [N]^d} \mathbf{1}_B(n) 
    =
    O_{d, k, \delta}
    \left(  
      \frac{\varepsilon}{\eta} +
      \frac{1}{\varepsilon^{2/3}}
      \left(  
        \left| \frac{|S|}{N^{d/2}} - 1 \right| + 
        {N^{-\frac{d}{2(1+2d)}}}
      \right)^{1/8k}
    \right).
  \end{align*}
   Using \cref{size of Sidon sets} observe that we have 
  \[
    \left| \frac{|S|}{N^{d/2}} - 1 \right| + 
    {N^{-\frac{d}{2(1+2d)}}}
    < 
    1
  \]
  for all large \(N\).
  Therefore, taking 
  \[
    \varepsilon =
    \frac{\eta}{200 k}
    \left(  
      \left| \frac{|S|}{N^{d/2}} - 1 \right| + 
      {N^{-\frac{d}{2(1+2d)}}}
    \right)^{3/40k}
  \]
  proves the result.
\end{proof}

\section*{Acknowledgements}

The author is grateful to his thesis supervisors, Jyoti Prakash Saha 
and Saurabh Shrivastava for their support during the preparation of this article.
He acknowledges the fellowship from the Council of Scientific and 
Industrial Research (CSIR), India, file no.~09/1020(17125)/2023-EMR-I,
during the Ph.D. program at IISER Bhopal.

% \bibliography{../mybib}

% \bib, bibdiv, biblist are defined by the amsrefs package.

\end{document}